%
\documentclass[12pt, reqno]{amsart}
\usepackage{amsmath, amsthm, amscd, amsfonts, amssymb, graphicx, color}
\usepackage[bookmarksnumbered, colorlinks, plainpages]{hyperref}
\hypersetup{colorlinks=true,linkcolor=red, anchorcolor=green, citecolor=cyan, urlcolor=red, filecolor=magenta, pdftoolbar=true}

\textheight 22.5truecm \textwidth 14.5truecm
\setlength{\oddsidemargin}{0.35in}\setlength{\evensidemargin}{0.35in}

\setlength{\topmargin}{-.5cm}

\newtheorem{theorem}{Theorem}[section]
\newtheorem{lemma}[theorem]{Lemma}
\newtheorem{proposition}[theorem]{Proposition}
\newtheorem{corollary}[theorem]{Corollary}
\theoremstyle{definition}

\newtheorem{example}[theorem]{Example}

\theoremstyle{remark}

\numberwithin{equation}{section}

\begin{document}
\title[Gr\"{u}ss type inequalities for positive linear maps on $C^*$-algebras]{Gr\"{u}ss type inequalities for positive linear maps on $C^*$-algebras}
\author[A. Dadkhah and M.S. Moslehian ]{Ali Dadkhah and Mohammad Sal Moslehian}
\address{Department of Pure Mathematics, Center Of Excellence in Analysis on Algebraic Structures (CEAAS), Ferdowsi University of Mashhad, P. O. Box 1159, Mashhad 91775, Iran}
\email{dadkhah61@yahoo.com}
\email{moslehian@um.ac.ir}
\subjclass[2010]{47A63; 47A30, 46L08, 15A60.}
\keywords{Gr\"{u}ss inequality, Positive linear map, Noncommutative probability space, Trace, Hilbert $C^*$-module }
\begin{abstract}
Let $\mathcal{A}$ and $\mathcal{B}$ be two unital $C^*$-algebras and let for $C\in\mathcal{A},\ \Gamma_C=\{\gamma \in \mathbb{C} : \|C-\gamma I\|=\inf_{\alpha\in \mathbb{C}} \|C-\alpha I\|\}$. We prove that if $\Phi :\mathcal{A} \longrightarrow \mathcal{B}$ is a unital positive linear map, then
\begin{eqnarray*}
\big|\Phi(AB)-\Phi(A)\Phi(B)\big| \leq \big\|\Phi(|A^*-\zeta I|^2)\big\|^\frac{1}{2} \big[\Phi(|B-\xi I|^2)\big]^\frac{1}{2}
\end{eqnarray*}
for all $A,B\in\mathcal{A}, \zeta \in \Gamma_A$ and $\xi\in\Gamma_B.$\\
In addition, we show that if $(\mathcal{A},\tau)$ is a noncommutative probability space and $T \in \mathcal{A}$ is a density operator, then
\begin{eqnarray*}
\ \ \big|\tau(TAB)-\tau(TA)\tau(TB)\big|\leq \|A-\zeta I\|_p\|B-\xi I\|_q\|T\|_r \ \ (p,q\geq 4, r\geq 2)
\end{eqnarray*}
and
\begin{eqnarray*}
\big|\tau(TAB)-\tau(TA)\tau(TB)\big|\leq \|A-\zeta I\|_p\|B-\xi I\|_q\|T\| \ \ \ \ (p,q\geq 2)\ \ \ \ \
\end{eqnarray*}
for every $A,B \in \mathcal{A}$ and $\zeta \in \Gamma_A,\xi \in \Gamma_B$. Our results generalize the corresponding results  for matrices to operators on spaces of arbitrary dimension.
\end{abstract} \maketitle

\section{introduction}

Gr\"{u}ss \cite{grus} showed that if $f$ and $g$ are integrable real functions on $[a, b]$ and there exist real constants $\alpha,\beta,\gamma,\Gamma$ such that $\alpha\leq f (x)\leq \beta$ and
$\gamma\leq g(x)\leq \Gamma$ for all $x\in [a, b]$, then
\begin{eqnarray*}
\left|\dfrac{1}{b-a}\int_a^b f(x)g(x)dx-\dfrac{1}{b-a}\int_a^b f(x)dx\ \dfrac{1}{b-a}\int_a^b g(x)dx\right|\leq \frac{1}{4}|\beta-\alpha| |\Gamma -\gamma|.
\end{eqnarray*}
This inequality was studied and extended by a number of mathematicians for different contents, such as inner product spaces,
quadrature formulae, finite Fourier transforms and linear functionals. For further information we refer interested reader to \cite{mata,peri} and references therein.\\
\noindent Bhatia and Davis \cite{bh1} showed that if $\Phi$ is any positive unital linear map between $C^*$-algebras and $m\leq A \leq M$ is any self-adjoint operator, then
\begin{eqnarray*}\label{bhatiadavis}
\Phi(A^2)-\Phi(A)^2\leq \frac{1}{4}(M-m)^2.
\end{eqnarray*}
This provides a reverse to the Kadison inequality $\Phi(A)^2 \leq \Phi(A^2)$; cf. \cite{KAD}. The authors of \cite{bh3} extended this inequality by showing that
\begin{eqnarray}\label{bhat}
\Phi(A^*A)-\Phi(A)^* \Phi(A) \leq \inf_{\alpha \in \mathbb{C}}\|A-\alpha I\|^2.
\end{eqnarray}
for every operator $A$. The inequality (\ref{bhat}) turns out to be useful in deriving various lower bounds for the spread by Bhatia
and Sharma \cite{bh2, bh23}. The work is then extended by Sharma and Kumari \cite{kumar} to
discuss the perturbation bounds for matrices. Moreover, the second author and Raji\'{c} \cite{mo2} proved that
\begin{eqnarray}\label{moraj}
\|\Phi(AB)-\Phi(A) \Phi(B) \|\leq \inf_{\alpha \in \mathbb{C}}\|A-\alpha I\| \inf_{\beta \in \mathbb{C}}\|B-\beta I\|
\end{eqnarray}
for any unital $n$-positive linear map $\Phi$ ($n\geq 3$) and any operators $A,B$ in a $C^*$-algebra.
 Matharu and the second author \cite{mata} gave several variants of inequality (\ref{moraj}) for unital completely positive linear maps and unitary invariant norms.
\\
Renaud \cite{renaud} showed that
\begin{eqnarray}\label{rena}
\big|{\rm tr}(TAB) -{\rm tr}(TA){\rm tr}(TB)\big| \leq krs,
\end{eqnarray}
for $1 \leq k\leq 4,$ where $A,B$ are square matrices, whose numerical ranges lie in the circular
discs of radii $r$ and $s$, respectively, and $T$ is a positive matrix of trace one.\\
The Gr\"{u}ss inequality was generalized in the setting of inner product spaces by Dragomir \cite{dra1}, and in the framework of inner product modules over $H^*$-algebras and $C^*$-algebras by Ili\v{s}evi\'{c} et al. \cite{il1,il2}.\\
In this paper, we study some Gr\"{u}ss type inequalities. In section \ref{plmsection}, we obtain two refined bounds for left sides of (\ref{bhat}) and (\ref{moraj}). We then give some applications of these inequalities to finite traces on noncommutative probability spaces. At the end of this section, we present some inequalities under certain mild condition.\\
 In section \ref{modulesection}, we generalize some Gr\"{u}ss type inequalities for Hilbert $C^*$-modules, as well as adjointable operators on these spaces.

\section{Preliminaries}

\noindent
Let $\mathbb{B}(\mathcal{H})$ denote the $C^*$-algebra of all bounded linear operators on a complex Hilbert space $(\mathcal H, \langle\cdot,\cdot\rangle).$ Throughout the paper, a capital letter means an operator in $\mathbb{B}(\mathcal{H})$. An operator $A$ is called positive if $\langle Ax, x\rangle\geq 0 $ for all $x \in \mathcal{H}$, and we then write $A\geq 0$. An operator A is said to be strictly positive (denoted by $ A >0$) if it is a positive invertible operator. The unit of $\mathbb{B}(\mathcal{H})$ is denoted by $I$. According to the Gelfand--Naimark--Segal theorem, every $C^*$-algebra can be regarded as a $C^*$-subalgebra of $\mathbb{B}(\mathcal{H})$ for some Hilbert space $\mathcal{H}$. A von Neumann algebra is a strongly operator closed $*$-subalgebra of $\mathbb{B}(\mathcal{H})$, which contains $I$.\\
In this paper, we use $\mathcal{A},\mathcal{B}, \cdots $ to denote $C^*$-algebras and von Neumann algebras and $\mathcal{A}_+$ to denote the set of all positive elements of $\mathcal{A}$. An operator $A$ is called {accretive} if ${\rm Re} (A)\geq 0$, where ${\rm Re}(A)=(A+A^*)/2.$ \\
 A linear map $\Phi :\mathcal{A}\longrightarrow \mathcal{B}$ between $C^*$-algebras is said to be $*$-linear if $\Phi(A^*)=\Phi(A)^*$. It is positive if $\Phi(A)\geq 0,$ whenever $A \geq 0.$
We say that $\Phi$ is unital if $\mathcal{A}, \mathcal{B}$ are unital
$C^*$-algebras and $\Phi$ preserves the identity. A linear map $\Phi$ is called $n$-positive if the map $\Phi_n : M_n(\mathcal{A})\longrightarrow M_n(\mathcal{B})$ defined by $\Phi_n([a_{ij}]_{n\times n}) = [\Phi(a_{ij})]_{n\times n}$ is positive, where $M_n(\mathcal{A})$ stands for the $C^*$-algebra of
$n \times n$ matrices with entries in $\mathcal{A}$. A map $\Phi$ is said to be completely positive if it is $n$-positive for every $n\in \mathbb{N}$. It is known that every positive functional is completely positive. Moreover, the following Cauchy--Schwarz inequality holds; cf. \cite[Section 3.3]{mur}:
\begin{eqnarray*}
|\tau(A^*B)|\leq \tau(A^*A)^\frac{1}{2} \tau(B^*B)^\frac{1}{2}
\end{eqnarray*} for every positive linear functional $\tau$ on $\mathcal{A}$ and $A,B\in \mathcal{A}$.\\ A finite trace $\tau$ on a von Neumann algebra $\mathcal{A}$ is a positive linear functional on $\mathcal{A}$ such that $\tau(I)<\infty$ and $\tau(AB)=\tau(BA)$ for all $A,B\in \mathcal{A}$. A trace $\tau$ is said to be normal if $\sup_i \tau(A_i) =\tau (\sup_i A_i)$ for any bounded increasing net $(A_i)$
in $\mathcal{A}_+$. It is faithful if $\tau(A) = 0$ implies $A = 0$ and normalized if $\tau (I) =1$. \\ A noncommutative probability space $(A,\tau )$ is a von Neumann algebra $\mathcal{A}$ with a normal faithful normalized finite trace $\tau$. In this case, for $ 1 \leq p<\infty$ and $A\in \mathcal{A}$, $\|A\|_p= (\tau (|A|^p))^\frac{1}{p}$ gives rise to a norm on $\mathcal{A}$. It is known that if $p\leq q$, then $\|A\|_p\leq \|A\|_q \leq\|A\|$
 for every $A,B \in \mathcal{A}$; see \cite[Proposition 1.9 and Remark 1.4]{xu}.\\
The notion of Hilbert $C^*$-module is a natural generalization of that of Hilbert space arising by replacing the field of scalars $\mathbb{C}$ by a $C^*$-algebra. Recall that if $(\mathcal{X},\langle \cdot ,\cdot\rangle) $ is Hilbert $C^*$-module over a $C^*$-algebra $\mathcal{A}$, then for every $x\in \mathcal{X}$ the norm on $\mathcal{X}$ is given by $\|x\|=\|\langle x,x\rangle\|^{\frac{1}{2}}$ and the ``absolute-value norm'' is defined by $|x|=\langle x,x \rangle ^{\frac{1}{2}}$ as a positive element of $\mathcal{A}$. A map $T:\mathcal{X}\longrightarrow \mathcal{X}$ is called adjointable if there is a map $T^*:\mathcal{X}\longrightarrow \mathcal{X}$  such that $\langle Tx,y\rangle=\langle x,T^*y \rangle$ for all $x,y \in \mathcal{X}$. The set of all adjointable maps $T:\mathcal{X}\longrightarrow\mathcal{X}$ is denoted by $\mathcal{L}(\mathcal{X})$, which is a unital $C^*$-algebra. For $x,y \in \mathcal{X}$, the rank one operator $x\otimes y:\mathcal{X}\longrightarrow\mathcal{X}$ is defined by $(x\otimes y) (z)=x\langle y,z\rangle$. An element $h\in \mathcal{X}$ is called a lifted projection if
 $|h|$ is a projection in $\mathcal{A}$. According to \cite{il2}, $h$ is a lifted projection in $\mathcal{X}$ if and only
 if $h$ is non zero and $h|h|=h.$ Clearly $h\otimes h$ is a projection in $\mathcal{L}(\mathcal{X}).$ \\
 For each element $A\in \mathcal{A},$ we define $R_A:\mathcal{X}\longrightarrow\mathcal{X}$ by $R_A x=xA$. It is easy to check that $R_A\in \mathcal{L}(\mathcal{X})$ if and only if $A\in \mathcal{Z}(\mathcal{A})$, where $\mathcal{Z}(\mathcal{A})$ is the center of $\mathcal{A}$. \\
The Cauchy--Schwarz inequality for $x,y\in \mathcal{X}$ asserts that (see \cite{lan})
 \begin{eqnarray*}
 \langle x,y\rangle \langle y,x\rangle\leq \|\langle y,y\rangle \| \langle x,x\rangle .
\end{eqnarray*}

\section{inequalities for positive linear maps}\label{plmsection}

In this section, we refine and generalize some inequalities for positive linear maps. Moreover, we give some Gr\"{u}ss type inequalities for traces on noncommutative probability spaces. We start our work with the following lemma.
\begin{lemma}\label{main1}
Let $\mathcal{A}$ and $\mathcal{B}$ be two unital $C^*$-algebras. If $\Phi :\mathcal{A} \longrightarrow \mathcal{B}$ is a unital $*$-linear map, then
\begin{eqnarray}\label{variancep}
\Phi(A^*A)-\Phi(A)^*\Phi(A) \leq \Phi(|A-\alpha I|^2)
\end{eqnarray}
for all $A\in\mathcal{A}$ and $\alpha \in \mathbb{C}$.

\end{lemma}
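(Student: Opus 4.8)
The plan is to show directly that the difference between the right-hand and left-hand sides of (\ref{variancep}) is exactly a positive element of $\mathcal{B}$; this makes the inequality transparent and reveals that positivity of $\Phi$ is not even needed—only $*$-linearity and unitality. First I would expand the square on the right, using that $|X|^2=X^*X$, to write
\begin{eqnarray*}
|A-\alpha I|^2=(A-\alpha I)^*(A-\alpha I)=A^*A-\alpha A^*-\overline{\alpha}A+|\alpha|^2 I.
\end{eqnarray*}
Applying $\Phi$ and invoking linearity together with the hypotheses $\Phi(A^*)=\Phi(A)^*$ and $\Phi(I)=I$ yields
\begin{eqnarray*}
\Phi(|A-\alpha I|^2)=\Phi(A^*A)-\alpha\,\Phi(A)^*-\overline{\alpha}\,\Phi(A)+|\alpha|^2 I.
\end{eqnarray*}

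Next I would subtract the left-hand quantity $\Phi(A^*A)-\Phi(A)^*\Phi(A)$ and note that the two $\Phi(A^*A)$ terms cancel, leaving
\begin{eqnarray*}
\Phi(|A-\alpha I|^2)-\big(\Phi(A^*A)-\Phi(A)^*\Phi(A)\big)=\Phi(A)^*\Phi(A)-\alpha\,\Phi(A)^*-\overline{\alpha}\,\Phi(A)+|\alpha|^2 I.
\end{eqnarray*}
The key observation is that the right-hand side factors as $(\Phi(A)-\alpha I)^*(\Phi(A)-\alpha I)=|\Phi(A)-\alpha I|^2$, which is manifestly positive in $\mathcal{B}$. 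Hence the displayed difference is a positive element, and this is precisely the asserted inequality (\ref{variancep}).

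Since the argument is purely algebraic, there is no serious obstacle. The only points deserving care are the correct bookkeeping of the complex conjugate $\overline{\alpha}$ when commuting the scalar past the adjoint, and the explicit recognition that the proof uses only $*$-linearity and unitality of $\Phi$ rather than positivity—an observation worth stressing, since it shows Lemma~\ref{main1} holds in greater generality than one might first expect and isolates exactly where the later positivity hypotheses will enter.
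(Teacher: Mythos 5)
Your proof is correct and is essentially the paper's own argument: the paper likewise shows that $\Phi(|A-\alpha I|^2)$ exceeds $\Phi(A^*A)-\Phi(A)^*\Phi(A)$ by exactly the positive element $\left(\Phi(A-\alpha I)\right)^*\Phi(A-\alpha I)=|\Phi(A)-\alpha I|^2$, using only $*$-linearity and unitality of $\Phi$. The only difference is presentational — you expand $\Phi(|A-\alpha I|^2)$ and factor the leftover terms, while the paper expands the translated variance expression and cancels — so there is nothing further to add.
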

\begin{proof}
Since $\Phi$ is a unital $*$-linear map, we have
\begin{eqnarray*}
\Phi(|A-\alpha I|^2 ) &\geq& \Phi \left((A-\alpha I)^* (A-\alpha I)\right)-\left(\Phi (A-\alpha I)\right)^*\Phi (A-\alpha I)\\ &=& \Phi(A^*A)-\alpha \Phi(A^*)-\overline{\alpha}\Phi(A) +\overline{\alpha}\alpha I - \Phi(A)^*\Phi(A)\\ && \ +\alpha \Phi(A^*)+\overline{\alpha}\Phi(A) -\overline{\alpha}\alpha I \\ &=& \Phi(A^*A)-\Phi(A)^*\Phi(A)
\end{eqnarray*}
for all $\alpha \in \mathbb{C}$ and for all $A\in \mathcal{A}$.
\end{proof}

Let $A$ be an element of a unital $C^*$-algebra. It is mentioned in (\cite[Theorem 4]{der}) that there is a scalar $\gamma \in \mathbb{C}$ such that
\begin{eqnarray*}\label{infdelta}
\inf_{\alpha \in \mathbb{C}}\|A-\alpha I\|=\|A-\gamma I\|.
\end{eqnarray*}
Indeed, let $ \Sigma=\{\|A-\alpha I\|:\alpha \in \mathbb{C}\}$ and $\Omega=\{\|A-\beta I\|:|\beta| \leq 3\|A\|\}.$ We see that $\inf \Sigma= \inf\Omega$. In fact, for every $\alpha \in \mathbb{C}$ there is an element $\beta$ with $|\beta|\leq 3\|A\|$ such that $\|A-\alpha I \|\geq \|A-\beta I\|$.
If $|\alpha|> 3\|A\|$, then by choosing $\beta=\|A\|$, we have
\begin{eqnarray*}
\|A-\alpha I\| \geq |\alpha|-\|A\| > 2\|A\|\geq \|A-\beta I\|.
\end{eqnarray*}
If $|\alpha|\leq 3\|A\|$, then we set $\beta=\alpha$.\\
Now, Let $\Gamma_A=\{\gamma \in \mathbb{C} : \|A-\gamma I\|=\inf_{\alpha \in \mathbb{C}} \|A-\alpha\|\}$. If $\Phi$  is a unital positive linear map, then
\begin{equation}\label{variancep2}
\Phi(A^*A)-\Phi(A)^*\Phi(A)\leq \Phi(|A-\alpha I|^2)\leq \Phi(\|A-\alpha I\|^2 I)
=\|A-\alpha I\|^2I
\end{equation}
for every $\alpha \in \mathbb{C}$. Hence
\begin{eqnarray*}
\Phi(A^*A)-\Phi(A)^*\Phi(A)\leq \inf_{\alpha \in \mathbb{C}} \|A-\alpha I\|^2I =\|A-\gamma I\|^2\quad (\gamma \in \Gamma_A).
\end{eqnarray*}

Now we give an example to show that inequality (\ref{variancep2}) is really finer than inequality (\ref{bhat}).
\begin{example}
Let $\Phi :M_2(\mathbb{C})\longrightarrow \mathbb{C}$ be given by $\Phi \left(\begin{bmatrix}
 a_{11}&a_{12}\\ a_{21}&a_{22}
 \end{bmatrix}\right)=
a_{11}.$
 It is evident that $\Phi $ is a unital positive linear map. Take $A= \begin{bmatrix}
1&2 \\ 2&4
\end{bmatrix}$. It is clear that the spectrum of $A$ is $\sigma(A)=\{0,5\}$. Since A is self-adjoint, we have
 $\inf_{\lambda \in \mathbb{C}}\|A-\lambda I_{2\times 2}\|^2 =(2.5)^2=6.25$, where $I_{2 \times 2}$ denotes the identity of the matrix algebra $M_2(\mathbb{C})$; see \cite[Corollary 1]{der}. If we put $\alpha=0$, then
\begin{eqnarray*}
\Phi(A^*A)-\Phi(A)^*\Phi(A)=4,\ \Phi (|A-\alpha I_{2\times 2}|^2) =5 .
 \end{eqnarray*}
Therefore, we see that
\begin{eqnarray*}
\Phi(A^*A)-\Phi(A)^*\Phi(A)<\Phi (|A-\alpha I_{2\times2}|^2)<\inf_{\lambda \in \mathbb{C}}\|A-\lambda I_{2\times 2}\|^2.\end{eqnarray*}
This example shows that even if $\alpha $ is not in $\Gamma_A$, inequality (\ref{variancep2}) can give a finer bound than that in inequality (\ref{bhat}).
\end{example}

Now we want to give a Gr\"{u}ss type inequality for $n$-positive linear maps. To achieve it we need two lemmas.
\begin{lemma}\label{matrixp} \cite[Lemma 2.1]{choi}
Let $A\geq 0,  B> 0$ be two operators in $ M_n(\mathcal{A})$. Then the block matrix $\begin{bmatrix}A& X\\ X^*& B  \end{bmatrix}$ is positive if and only if $A\geq X B^{-1} X^*$.
\end{lemma}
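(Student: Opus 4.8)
The plan is to prove both directions simultaneously by diagonalizing the block matrix through a congruence. Since $B>0$ is invertible in the $C^*$-algebra $M_n(\mathcal{A})$, I can form the Schur complement $S=A-XB^{-1}X^*$ and record the block factorization
\begin{eqnarray*}
\begin{bmatrix} A & X \\ X^* & B \end{bmatrix}
= \begin{bmatrix} I & XB^{-1} \\ 0 & I \end{bmatrix}
\begin{bmatrix} S & 0 \\ 0 & B \end{bmatrix}
\begin{bmatrix} I & 0 \\ B^{-1}X^* & I \end{bmatrix},
\end{eqnarray*}
which I would verify by expanding the right-hand side and using $S+XB^{-1}X^*=A$. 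Because $B$ is self-adjoint we have $(B^{-1})^*=B^{-1}$, so the two outer triangular factors are mutually adjoint; writing $L=\begin{bmatrix} I & 0 \\ B^{-1}X^* & I \end{bmatrix}$, the identity above reads $L^*\,\mathrm{diag}(S,B)\,L$.

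Next I would exploit that $L$ is invertible in $M_2(M_n(\mathcal{A}))$, with inverse $\begin{bmatrix} I & 0 \\ -B^{-1}X^* & I \end{bmatrix}$, together with the standard fact that congruence by an invertible element of a $C^*$-algebra preserves positivity in both directions. Indeed $L^*ML\geq 0$ whenever $M\geq 0$, and conversely, applying this same principle to $M=(L^{-1})^*(L^*ML)L^{-1}$ recovers $M\geq 0$ from $L^*ML\geq 0$. Hence the block matrix is positive if and only if $\mathrm{diag}(S,B)\geq 0$.

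Finally, since $B>0$, the diagonal element $\mathrm{diag}(S,B)$ is positive precisely when its remaining block $S$ is positive, i.e.\ when $A\geq XB^{-1}X^*$, which is the asserted equivalence. I do not expect a genuine obstacle here: the only point demanding care is that everything takes place in the $C^*$-algebra $M_{2n}(\mathcal{A})\cong M_2(M_n(\mathcal{A}))$, so I must argue positivity through the $C^*$-algebraic operation $a\mapsto c^*ac$ rather than through a scalar quadratic form on a Hilbert space; the algebraic factorization itself is valid verbatim in this more general setting.
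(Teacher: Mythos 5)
Your argument is correct and complete: the congruence factorization is verified easily, positivity is preserved in both directions under $M\mapsto L^*ML$ with $L$ invertible, and the block-diagonal reduction finishes the equivalence; you are also right that the only subtlety is phrasing positivity via $a\mapsto c^*ac$ in $M_2(M_n(\mathcal{A}))$ rather than via vectors. Note that the paper offers no proof of this lemma at all -- it simply cites Choi's Lemma 2.1 -- so your Schur-complement argument supplies the standard self-contained justification for the cited fact.
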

The next lemma is known \cite[Theorem 1]{mathi}. We prove it since we need some portion of the proof later.
\begin{lemma}\label{matrixvar}
Let $\mathcal{A}$ and $\mathcal{B}$ be two unital $C^*$-algebras. If $\Phi :\mathcal{A} \longrightarrow \mathcal{B}$ is a unital $n$-positive linear map for some $n\geq 3,$ then
 \begin{eqnarray*}
\begin{bmatrix}  \Phi(A^*A)-\Phi(A)^*\Phi(A) & \Phi(A^*B)-\Phi(A)^*\Phi(B)  \\ \Phi(B^*A)-\Phi(B)^*\Phi(A)   & \Phi(B^*B)-\Phi(B)^*\Phi(B)
\end{bmatrix} \geq 0
\end{eqnarray*}
for every $A,B\in \mathcal{A}$.
\end{lemma}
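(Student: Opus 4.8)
The plan is to deduce this $2\times 2$ block positivity from the $3$-positivity of $\Phi$ (which is available since $n\geq 3$ and $n$-positivity implies $k$-positivity for $k\leq n$), applied to one cleverly chosen positive element of $M_3(\mathcal{A})$, followed by a Schur-complement reduction via Lemma \ref{matrixp}. Note first that, being positive, $\Phi$ is automatically $*$-linear, so $\Phi(A^*)=\Phi(A)^*$ throughout.

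First I would form the operator matrix
\begin{eqnarray*}
V=\begin{bmatrix} I & A & B \\ 0 & 0 & 0 \\ 0 & 0 & 0 \end{bmatrix}\in M_3(\mathcal{A}),
\end{eqnarray*}
so that $V^*V$ is positive and equals
\begin{eqnarray*}
V^*V=\begin{bmatrix} I & A & B \\ A^* & A^*A & A^*B \\ B^* & B^*A & B^*B \end{bmatrix}\geq 0.
\end{eqnarray*}
Applying the positive map $\Phi_3$ to this positive matrix, and using unitality together with $\Phi(A^*)=\Phi(A)^*$ and $\Phi(B^*A)=\Phi(A^*B)^*$, yields
\begin{eqnarray*}
P:=\Phi_3(V^*V)=\begin{bmatrix} I & \Phi(A) & \Phi(B) \\ \Phi(A)^* & \Phi(A^*A) & \Phi(A^*B) \\ \Phi(B)^* & \Phi(B^*A) & \Phi(B^*B) \end{bmatrix}\geq 0.
\end{eqnarray*}

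Next I would regroup $P$ as a $2\times 2$ block matrix whose scalar corner $I$ is strictly positive. Writing $X=\begin{bmatrix}\Phi(A) & \Phi(B)\end{bmatrix}$ and conjugating $P$ by the (unitary) permutation that swaps the identity corner into the bottom-right position, positivity of $P$ becomes positivity of
\begin{eqnarray*}
\begin{bmatrix} \begin{bmatrix}\Phi(A^*A) & \Phi(A^*B)\\ \Phi(B^*A) & \Phi(B^*B)\end{bmatrix} & X^* \\[2mm] X & I \end{bmatrix}\geq 0.
\end{eqnarray*}
Invoking Lemma \ref{matrixp} with the invertible corner $I>0$ then gives
\begin{eqnarray*}
\begin{bmatrix}\Phi(A^*A) & \Phi(A^*B)\\ \Phi(B^*A) & \Phi(B^*B)\end{bmatrix}\geq X^*I^{-1}X=\begin{bmatrix}\Phi(A)^*\Phi(A) & \Phi(A)^*\Phi(B)\\ \Phi(B)^*\Phi(A) & \Phi(B)^*\Phi(B)\end{bmatrix},
\end{eqnarray*}
which, after moving the right-hand side over, is exactly the asserted inequality.

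The computations are entirely routine; the only genuine idea is the choice of $V$ (equivalently, of the positive $3\times 3$ matrix) that encodes $A$, $B$, and the identity simultaneously, so that the Schur complement of $\Phi_3(V^*V)$ against the identity corner is precisely the covariance-type block. The points to check carefully are that the rearrangement of the corners of $P$ preserves positivity (it is implemented by a scalar permutation matrix, hence a unitary congruence) and that Lemma \ref{matrixp} applies verbatim since $I$ is strictly positive. I also expect the reason one needs $n\geq 3$ rather than $n\geq 2$ to be visible here: the construction genuinely uses three rows (the identity together with $A$ and $B$), so mere $2$-positivity would not produce the required positive $3\times 3$ matrix.
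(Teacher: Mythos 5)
Your proof is correct and follows essentially the same route as the paper: both factor the $3\times 3$ matrix built from $I$, $A$, $B$ as a product of the form $V^*V$, apply $3$-positivity of $\Phi$, and take the Schur complement against the identity corner using Lemma \ref{matrixp}. The only cosmetic difference is that the paper places the identity block in the bottom-right corner from the outset and pads the image to a $4\times 4$ matrix so that Lemma \ref{matrixp} applies with equal-sized $2\times2$ blocks, whereas you permute the corners and invoke the Schur-complement criterion with unequal block sizes, which is harmless since the criterion holds in that generality.
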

\begin{proof}
First we have
 \begin{eqnarray}\label{matrixp2}
\begin{bmatrix}  A^*A & A^*B & A^* \\ B^*A & B^*B & B^* \\ A& B & I
\end{bmatrix} =\begin{bmatrix} A^*&0&0 \\ B^*&0&0 \\ I&0&0  \end{bmatrix}\begin{bmatrix} A&B&I\\ 0&0&0 \\ 0&0&0  \end{bmatrix}\geq 0.
\end{eqnarray}
It follows from $3$-positivity of $\Phi$ that
 \begin{eqnarray*}
\begin{bmatrix}  \Phi(A^*A) & \Phi(A^*B) & \Phi(A^*) \\ \Phi( B^*A) &\Phi( B^*B)& \Phi(B^*) \\ \Phi(A)& \Phi(B) & I
\end{bmatrix} \geq 0,
\end{eqnarray*}
which implies that
 \begin{eqnarray*}
\begin{bmatrix}  \Phi(A^*A) & \Phi(A^*B) & \Phi(A^*) & 0\\ \Phi( B^*A) &\Phi( B^*B)& \Phi(B^*) & 0 \\ \Phi(A)& \Phi(B) & I & 0 \\ 0 & 0 & 0 & I
\end{bmatrix} \geq 0.
\end{eqnarray*}
Hence, by applying Lemma \ref{matrixp}, we assert    that
\begin{eqnarray*}
\begin{bmatrix}  \Phi(A^*A) & \Phi(A^*B) \\ \Phi(B^*A)  & \Phi(B^*B)
\end{bmatrix} \geq \begin{bmatrix}  \Phi(A)^* & 0 \\ \Phi(B)^* & 0\end{bmatrix} \begin{bmatrix} I & 0\\ 0 & I
\end{bmatrix} \begin{bmatrix}  \Phi(A) & \Phi(B) \\ 0  & 0\end{bmatrix}.
\end{eqnarray*}
In other words,
\begin{eqnarray*}
\begin{bmatrix}  \Phi(A^*A) & \Phi(A^*B) \\ \Phi(B^*A)  & \Phi(B^*B)
\end{bmatrix} \geq \begin{bmatrix}  \Phi(A)^*\Phi(A) & \Phi(A)^*\Phi(B) \\ \Phi(B)^*\Phi(A) & \Phi(B)^*\Phi(B)\end{bmatrix}.
\end{eqnarray*}
\end{proof}

The following theorem is a refinement of inequality (\ref{moraj}).
\begin{theorem}\label{grussp}
Let $\mathcal{A}$ and $\mathcal{B}$ be two unital $C^*$-algebras. If $\Phi :\mathcal{A} \longrightarrow \mathcal{B}$ is a unital $n$-positive linear map for some $n\geq 3,$ then
\begin{eqnarray}\label{covariance}
\big|\Phi(AB)-\Phi(A)\Phi(B) \big|\leq \big\|\Phi(|A^*-\alpha I|^2)\big\|^\frac{1}{2} \big[ \Phi(|B-\beta I|^2)\big]^\frac{1}{2}
\end{eqnarray}
for every $A,B\in\mathcal{A}$ and $\alpha,\beta \in \mathbb{C}.$
In particular,
\begin{eqnarray}\label{refine}
\big\|\Phi(AB)-\Phi(A)\Phi(B) \big\|&\leq& \inf_{\alpha \in \mathbb{C}}\|A-\alpha I\|\inf_{\beta \in \mathbb{C}}\|A-\beta I\|.
\end{eqnarray}
\end{theorem}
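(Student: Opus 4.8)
The plan is to exhibit $\Phi(AB)-\Phi(A)\Phi(B)$ as the off-diagonal corner of the positive $2\times 2$ operator matrix furnished by Lemma \ref{matrixvar}, and then to convert that block-positivity into the asserted bound. Since a unital positive (hence $*$-linear) map satisfies $\Phi(A^*)=\Phi(A)^*$, replacing the pair $(A,B)$ by $(A^*,B)$ in Lemma \ref{matrixvar} gives
\[
\begin{bmatrix} P & C \\ C^* & Q \end{bmatrix}\ge 0,
\]
where $C=\Phi(AB)-\Phi(A)\Phi(B)$, $P=\Phi(AA^*)-\Phi(A)\Phi(A)^*$, and $Q=\Phi(B^*B)-\Phi(B)^*\Phi(B)$; one checks the three corners by expanding $\Phi((A^*)^*(A^*))$, $\Phi((A^*)^*B)$ and $\Phi(B^*B)$ and using $\Phi(A^*)=\Phi(A)^*$.

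The crux is to extract $C^*C\le \|P\|\,Q$ from this positivity. First I would pass to $P+\varepsilon I>0$ for $\varepsilon>0$, which keeps the block matrix positive, so that Lemma \ref{matrixp} applies (with the roles of the diagonal blocks interchanged) and yields $Q\ge C^*(P+\varepsilon I)^{-1}C$. Since $P+\varepsilon I\le(\|P\|+\varepsilon)I$, inversion being anti-monotone on positive invertible elements gives $(P+\varepsilon I)^{-1}\ge(\|P\|+\varepsilon)^{-1}I$, hence $(\|P\|+\varepsilon)^{-1}C^*C\le Q$; letting $\varepsilon\downarrow 0$ produces $C^*C\le\|P\|\,Q$. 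Because $t\mapsto t^{1/2}$ is operator monotone, taking square roots gives $|C|=(C^*C)^{1/2}\le\|P\|^{1/2}Q^{1/2}$.

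It remains to estimate $P$ and $Q$ by Lemma \ref{main1}. Applying that lemma to $A^*$ yields $P=\Phi(AA^*)-\Phi(A)\Phi(A)^*\le\Phi(|A^*-\alpha I|^2)$, so $\|P\|\le\|\Phi(|A^*-\alpha I|^2)\|$; applying it to $B$ yields $Q\le\Phi(|B-\beta I|^2)$, and monotonicity of the square root gives $Q^{1/2}\le\Phi(|B-\beta I|^2)^{1/2}$. Substituting these into $|C|\le\|P\|^{1/2}Q^{1/2}$ gives inequality \eqref{covariance}. For the particular case \eqref{refine}, I would insert the elementary bounds $\Phi(|A^*-\alpha I|^2)\le\|A^*-\alpha I\|^2 I$ and $\Phi(|B-\beta I|^2)\le\|B-\beta I\|^2 I$ (as in \eqref{variancep2}) into \eqref{covariance}, take the operator norm to obtain $\|C\|\le\|A^*-\alpha I\|\,\|B-\beta I\|$, and finally take the infimum over $\alpha,\beta\in\mathbb{C}$, using $\inf_\alpha\|A^*-\alpha I\|=\inf_\alpha\|A-\alpha I\|$.

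The step I expect to be the main obstacle is the passage from block-positivity to $C^*C\le\|P\|\,Q$: one must handle the possible non-invertibility of $P$ through the $\varepsilon$-regularization and carefully invoke the order-theoretic facts (anti-monotonicity of inversion and monotonicity of the square root) so that every inequality remains valid in an arbitrary $C^*$-algebra without recourse to a faithful representation.
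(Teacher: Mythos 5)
Your proposal is correct and follows essentially the same route as the paper: Lemma \ref{matrixvar} gives the positive $2\times 2$ block matrix, Lemma \ref{matrixp} converts this to $C^*C\le\|P\|\,Q$, and Lemma \ref{main1} together with \eqref{variancep2} finishes both \eqref{covariance} and \eqref{refine}. The only differences are cosmetic or an improvement: you substitute $A^*$ at the outset rather than at the end, and your $\varepsilon$-regularization handles the possible non-invertibility of $P$ more carefully than the paper's ``without loss of generality $\Phi(A^*A)-\Phi(A)^*\Phi(A)>0$''.
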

\begin{proof}
Without loss of generality we can assume that $\Phi(A^*A)-\Phi(A)^*\Phi(A) >0$. According to Lemma \ref{matrixp} and Lemma \ref{matrixvar}, we may state that

\begin{eqnarray*}
 \Phi(B^*B)-\Phi(B)^*\Phi(B) &\geq& \big(\Phi(B^*A)-\Phi(B)^*\Phi(A)\big) \\ && \cdot \big(\Phi(A^*A)-\Phi(A)^*\Phi(A)\big)^{-1} \big(\Phi(A^*B)-\Phi(A)^*\Phi(B)\big)\\ &\geq& \big(\Phi(B^*A)-\Phi(B)^*\Phi(A)\big) \\ && \cdot \big\|\Phi(A^*A)-\Phi(A)^*\Phi(A)\big\|^{-1} \big(\Phi(A^*B)-\Phi(A)^*\Phi(B)\big).
\end{eqnarray*}
Hence, we infer
\begin{eqnarray*}
\big|\Phi(A^*B)-\Phi(A)^*\Phi(B) \big|^2\leq \big\|\Phi(A^*A)-\Phi(A)^*\Phi(A)\big\|\big(\Phi(B^*B)-\Phi(B)^*\Phi(B)\big).
\end{eqnarray*}
Now, by using Lemma \ref{main1} and taking square root, we get
\begin{eqnarray*}
\big|\Phi(A^*B)-\Phi(A)^*\Phi(B) \big|\leq \big\|\Phi(|A-\alpha I|^2)\big\|^\frac{1}{2} \big(\Phi(|B-\beta I|^2)\big)^\frac{1}{2}
\end{eqnarray*}
for all $\alpha,\beta \in \mathbb{C}$.
Replacing $A$ with $A^*$ in the latter inequality we obtain (\ref{covariance}). Next, by using the fact that $\Gamma_{A^*}=\Gamma_A$ and applying inequality (\ref{variancep2}) we can get (\ref{refine}).
\end{proof}
The following example shows that we can't replace the condition $n\geq 3$ in inequality \eqref{covariance} of Theorem \ref{grussp} by $n\geq 1$ or $n\geq 2$.
\begin{example}
Let $\varphi: M_2(\mathbb{C}) \longrightarrow M_2(\mathbb{C})$ be defined by $\varphi(A) = A^t $ (the transpose
map). It is known that $\varphi$  is a unital positive linear map, but it is not $2$-positive (and so $3$-positive). Take $A=\begin{bmatrix} 1&2\\ 2 &4\end{bmatrix}$ and $B=\begin{bmatrix} 1&0  \\ 0 &4\end{bmatrix}$.  Then we have  $\sigma(A)=\{0,5\}$ and $\sigma(B)=\{1,4\}$. So, $\inf_{\alpha \in \mathbb{C}}\|A-\alpha I_{2\times 2}\|= 2.5$ and $\inf_{\beta \in \mathbb{C}}\|B-\beta I_{2\times2}\|=1.5$. Moreover it is easy to check that $\alpha_0=2.5\in \Gamma_A , \beta_0=2.5 \in \Gamma_B$. Hence

\begin{eqnarray*}
\left|\varphi(AB)-\varphi(A)\varphi(B)\right|
&=&\begin{bmatrix} 6&0 \\0 &6\end{bmatrix}>\begin{bmatrix} 3.75&0  \\ 0 &3.75\end{bmatrix}\\ &=& \|\varphi(|A-\alpha_0 I_{2\times 2}|^2)\|^\frac{1}{2} [\varphi(|B-\beta_0 I_{2\times 2}|^2)]^\frac{1}{2}.
\end{eqnarray*}

Moreover, according to \cite{mathi}, if we take $\Psi: M_3(\mathbb{C}) \longrightarrow M_3(\mathbb{C})$ by $\Psi(A)=2Tr(A)I_{3\times 3}-A$, then $\Psi$ is a unital $2$-positive linear map but it is not $3$-positive. A similar calculation as above shows that inequality \eqref{covariance} of Theorem \ref{grussp} is not true for $A=\begin{bmatrix} 0&0&0 \\0&0&0\\ 1&0&0\end{bmatrix}$ and  $B=\begin{bmatrix} 0&1&0 \\0&0&0\\ 0&0&0\end{bmatrix}$.\\
\end{example}

Let us put $C_{S,T}(A) :=(A-S)^*(T-A)$. An easy computation shows that \begin{eqnarray*} {\rm Re}\ C_{S,T}(A)=\dfrac{1}{4}\big|T-S\big|^2 -\left|A-\dfrac{S+T}{2}\right|^2\end{eqnarray*} for every $A,T,S$ in a $C^*$-algebra.
So operator $C_{S,T}(A)$ is accretive if and only if \begin{equation}\label{eqacc} \left|A-\dfrac{S +T}{2} \right|^2 \leq \frac{1}{4} \big|T -S\big|^2.  \end{equation}
For simplicity, $C_{\alpha I,\beta I}(A)$ is denoted by $C_{\alpha ,\beta }(A),$ for $\alpha,\beta\in \mathbb{C}$.

Next we prove a lemma to give a Gr\"{u}ss type inequality for some classes of accretive operators.

\begin{lemma}\label{acc1}
Suppose that  $\mathcal{A}$ and $\mathcal{B}$ are two unital $C^*$-algebras, $A\in \mathcal{A}$ and $\alpha,\beta\in \mathbb{C}$ such that the operator $\Phi\big(C_{\alpha,\beta}(A)\big)$ is accretive. If $\Phi :\mathcal{A} \longrightarrow \mathcal{B}$ is a unital positive linear map, then
\begin{eqnarray*}
\Phi(A^*A)-\Phi(A)^*\Phi(A) &\leq& \frac{1}{4} \big|\beta -\alpha \big|^2 I- \Phi({\rm Re}\ C_{\alpha,\beta}(A))\\ &\leq&\frac{1}{4}\big|\beta -\alpha \big|^2 I.
\end{eqnarray*}
\end{lemma}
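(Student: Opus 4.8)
The plan is to reduce everything to Lemma~\ref{main1} by choosing the free scalar there to be the midpoint $\frac{\alpha+\beta}{2}$, and then to translate the resulting bound into the language of $C_{\alpha,\beta}(A)$ via the accretivity identity recorded just before the lemma. First I would apply Lemma~\ref{main1} with $\alpha$ replaced by $\tfrac{\alpha+\beta}{2}$ to obtain $\Phi(A^*A)-\Phi(A)^*\Phi(A)\le \Phi\bigl(\bigl|A-\tfrac{\alpha+\beta}{2}I\bigr|^2\bigr)$. This isolates the quantity we must control on the right-hand side.

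Next I would invoke the identity ${\rm Re}\,C_{S,T}(A)=\tfrac14|T-S|^2-\bigl|A-\tfrac{S+T}{2}\bigr|^2$, specialized to $S=\alpha I$ and $T=\beta I$, which rearranges to $\bigl|A-\tfrac{\alpha+\beta}{2}I\bigr|^2=\tfrac14|\beta-\alpha|^2I-{\rm Re}\,C_{\alpha,\beta}(A)$. Substituting this into the previous display and using that $\Phi$ is linear and unital (so $\Phi(I)=I$) would give $\Phi(A^*A)-\Phi(A)^*\Phi(A)\le \tfrac14|\beta-\alpha|^2I-\Phi\bigl({\rm Re}\,C_{\alpha,\beta}(A)\bigr)$, which is exactly the first asserted inequality.

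For the second inequality it suffices to verify $\Phi\bigl({\rm Re}\,C_{\alpha,\beta}(A)\bigr)\ge 0$. Here I would use that a unital positive linear map is automatically $*$-linear, so that $\Phi$ commutes with the operation $X\mapsto{\rm Re}(X)=\tfrac12(X+X^*)$; consequently $\Phi\bigl({\rm Re}\,C_{\alpha,\beta}(A)\bigr)={\rm Re}\,\Phi\bigl(C_{\alpha,\beta}(A)\bigr)$. The hypothesis that $\Phi\bigl(C_{\alpha,\beta}(A)\bigr)$ is accretive says precisely that this real part is positive, which closes the argument.

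I do not anticipate a genuine obstacle: the computation is short once the midpoint substitution is made. The only points requiring care are selecting $\gamma=\frac{\alpha+\beta}{2}$ as the scalar in Lemma~\ref{main1} (rather than $\alpha$ or $\beta$), so that the completed-square term matches $C_{\alpha,\beta}(A)$, and justifying that ${\rm Re}$ may be pulled through $\Phi$, which rests on the standard fact that positivity forces $*$-linearity.
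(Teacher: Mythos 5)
Your proposal is correct and follows essentially the same route as the paper: apply Lemma~\ref{main1} with the midpoint $\tfrac{\alpha+\beta}{2}$, rewrite $\bigl|A-\tfrac{\alpha+\beta}{2}I\bigr|^2$ via the identity for ${\rm Re}\,C_{\alpha,\beta}(A)$, and use accretivity of $\Phi\bigl(C_{\alpha,\beta}(A)\bigr)$ (together with $*$-linearity of $\Phi$) for the final bound. Your explicit justification that ${\rm Re}$ commutes with $\Phi$ is a point the paper leaves implicit, but the argument is the same.
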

\begin{proof}
According to Lemma \ref{main1} we have
\begin{eqnarray*}
\Phi(A^*A)-\Phi(A)^*\Phi(A) \leq \Phi\left(\left|A-\dfrac{\alpha+\beta}{2}I\right|^2\right).
\end{eqnarray*}
Hence, by using inequality (\ref{eqacc}), we can write
\begin{eqnarray*}
\Phi(A^*A)-\Phi(A)^*\Phi(A) &\leq& \Phi\left(\left|A-\dfrac{\alpha+\beta}{2}I\right|^2\right)-\dfrac{1}{4}|\beta -\alpha|^2 I +\dfrac{1}{4}|\beta -\alpha|^2I \\ &=&\dfrac{1}{4}|\beta -\alpha|^2I - \Phi({\rm Re}\ C_{\alpha,\beta}(A))\\ &\leq&\frac{1}{4}|\beta -\alpha |^2 I.
\end{eqnarray*}
The last inequality follows from the accretivity of $\Phi \big(C_{\alpha,\beta}(A)\big).$
\end{proof}

Next, we use Lemma \ref{acc1} and Theorem \ref{grussp} to give the following Gr\"{u}ss type inequality.
\begin{corollary}\label{cor3}
Suppose that $A,B \in \mathcal{A}$ and $\Phi:\mathcal{A}\longrightarrow \mathcal{B}$ is a unital $n$-positive linear map for some $n\geq 3.$ Let $\alpha,\beta,\gamma,\Gamma \in \mathbb{C}$ and $\Phi\big(C_{\alpha,\beta}(A^*)\big),\Phi\big(C_{\gamma,\Gamma}(B)\big)$ be accretive operators. Then
\begin{eqnarray*}
\big|\Phi(AB)-\Phi(A)\Phi(B) \big| &\leq&\left\|\frac{1}{4} |\beta -\alpha |^2 I- \Phi\big({\rm Re}\ C_{\alpha,\beta}(A^*)\big)\right\|^\frac{1}{2}\\ && \cdot \left(\frac{1}{4} |\gamma -\Gamma|^2 I- \Phi\big({\rm Re}\ C_{\gamma,\Gamma}(B)\big)\right)^\frac{1}{2}\\ &\leq&\frac{1}{4}|\beta -\alpha | |\Gamma -\gamma | I.
\end{eqnarray*}
\end{corollary}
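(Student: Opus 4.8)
The plan is to apply Theorem \ref{grussp} directly, with its two free scalar parameters specialized to the midpoints $\frac{\alpha+\beta}{2}$ and $\frac{\gamma+\Gamma}{2}$, and then to recognize the two resulting factors as exactly the quantities appearing in Lemma \ref{acc1}. Concretely, inequality \eqref{covariance} of Theorem \ref{grussp}, applied to the pair $A,B$ with the parameter attached to $A^*$ taken to be $\frac{\alpha+\beta}{2}$ and the parameter attached to $B$ taken to be $\frac{\gamma+\Gamma}{2}$, gives
\begin{eqnarray*}
\big|\Phi(AB)-\Phi(A)\Phi(B)\big| \leq \left\|\Phi\left(\left|A^*-\tfrac{\alpha+\beta}{2}I\right|^2\right)\right\|^{\frac12} \left[\Phi\left(\left|B-\tfrac{\gamma+\Gamma}{2}I\right|^2\right)\right]^{\frac12}.
\end{eqnarray*}
This use of Theorem \ref{grussp} is legitimate precisely because $\Phi$ is $n$-positive with $n\geq 3$, which is the standing hypothesis of the corollary.

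First I would convert the two factors by means of the identity for the real part of $C_{S,T}$ recorded just before Lemma \ref{acc1}. Taking $S=\alpha I,\ T=\beta I$ and evaluating at $A^*$ yields
\begin{eqnarray*}
\left|A^*-\tfrac{\alpha+\beta}{2}I\right|^2 = \tfrac14\big|\beta-\alpha\big|^2 I - {\rm Re}\,C_{\alpha,\beta}(A^*),
\end{eqnarray*}
and applying the unital linear map $\Phi$ to both sides turns this into the exact equality
\begin{eqnarray*}
\Phi\left(\left|A^*-\tfrac{\alpha+\beta}{2}I\right|^2\right) = \tfrac14\big|\beta-\alpha\big|^2 I - \Phi\big({\rm Re}\,C_{\alpha,\beta}(A^*)\big),
\end{eqnarray*}
with the analogous identity for $B$ coming from $S=\gamma I,\ T=\Gamma I$. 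Substituting both into the displayed bound produces exactly the first inequality of the corollary; note that no accretivity is needed up to this point, only linearity and unitality of $\Phi$.

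The remaining task is to pass from this first bound to the scalar bound $\frac14|\beta-\alpha||\Gamma-\gamma|I$, and this is where the accretivity hypotheses enter. By the last line of Lemma \ref{acc1} applied to $A^*$ (whose hypothesis is precisely that $\Phi(C_{\alpha,\beta}(A^*))$ be accretive) one has $\frac14|\beta-\alpha|^2 I - \Phi({\rm Re}\,C_{\alpha,\beta}(A^*)) \leq \frac14|\beta-\alpha|^2 I$, so the norm of the first factor is at most $\frac14|\beta-\alpha|^2$ and its square root is at most $\frac12|\beta-\alpha|$. For the second factor, the same lemma applied to $B$ gives the operator inequality $\frac14|\Gamma-\gamma|^2 I - \Phi({\rm Re}\,C_{\gamma,\Gamma}(B)) \leq \frac14|\Gamma-\gamma|^2 I$. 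The main subtlety is that this second factor must be treated as an operator rather than as a scalar: I would invoke operator monotonicity of $t\mapsto t^{1/2}$ to deduce $\left(\frac14|\Gamma-\gamma|^2 I - \Phi({\rm Re}\,C_{\gamma,\Gamma}(B))\right)^{1/2} \leq \frac12|\Gamma-\gamma| I$. Multiplying the scalar bound on the first factor by this operator bound on the second then gives $\frac14|\beta-\alpha||\Gamma-\gamma|I$, which completes the proof. The only genuinely delicate point is this last operator-monotonicity argument, since here one compares operator square roots rather than numbers.
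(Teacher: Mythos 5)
Your proof is correct and follows essentially the route the paper intends (it gives no written proof, only the remark that the corollary follows from Lemma \ref{acc1} and Theorem \ref{grussp}): you specialize the scalars in Theorem \ref{grussp} to the midpoints and use the identity ${\rm Re}\,C_{S,T}(A)=\frac{1}{4}|T-S|^2-|A-\frac{S+T}{2}|^2$, which is exactly the computation underlying Lemma \ref{acc1}. The final step you flag as delicate is in fact elementary, since for a positive operator $X$ the inequality $X\leq cI$ is equivalent to $\sigma(X)\subseteq[0,c]$, whence $X^{1/2}\leq c^{1/2}I$ by the spectral mapping theorem, with no need for general operator monotonicity.
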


Here, we extend our work to give some Gr\"{u}ss type inequalities for the trace on a noncommutative probability space. In particular, the following inequalities are generalizations of inequality (\ref{rena}).

\begin{theorem}\label{trace1}
Let $(\mathcal{A},\tau)$ be a noncommutative probability space and $T \in \mathcal{A}$ be a density operator {\rm(}positive and $\tau (T)=1${\rm)}. Then the following inequalities hold:
\begin{enumerate}
\item{$|\tau(TAB)-\tau(TA)\tau(TB)|\leq \|A-\alpha I\|_4\|B-\beta I\|_4\|T\|_2$},
\item{$|\tau(TAB)-\tau(TA)\tau(TB)|\leq \|A-\alpha I\|_2\|B-\beta I\|_2 \|T\|$},
\item{$|\tau(TAB)-\tau(TA)\tau(TB)|\leq \|A-\alpha I\|\|B-\beta I\|$}
\end{enumerate}
for every $A,B \in \mathcal{A}$ and $\alpha,\beta \in \mathbb{C}$. In particular, by choosing $\alpha\in \Gamma_A$ and $\beta \in \Gamma_B$, (3) can be sharpened.
\end{theorem}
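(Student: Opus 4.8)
The plan is to realize the ``covariance'' $\tau(TAB)-\tau(TA)\tau(TB)$ as the Gr\"uss expression for a scalar functional and then estimate the two resulting factors by noncommutative $L^p$-inequalities. I would define $\varphi:\mathcal{A}\longrightarrow\mathbb{C}$ by $\varphi(X)=\tau(TX)$. Since $T\geq 0$ and $\tau$ is a trace, $\varphi(X)=\tau(T^{1/2}XT^{1/2})\geq 0$ whenever $X\geq 0$, and $\varphi(I)=\tau(T)=1$; thus $\varphi$ is a unital positive linear functional, hence completely positive, and in particular $n$-positive for every $n\geq 3$. Because $\varphi(AB)-\varphi(A)\varphi(B)=\tau(TAB)-\tau(TA)\tau(TB)$, applying Theorem \ref{grussp} to $\Phi=\varphi$ (so that the norm $\|\cdot\|$ and the square roots in \eqref{covariance} reduce to operations on nonnegative reals, as $\mathcal{B}=\mathbb{C}$) yields the master estimate
\begin{eqnarray*}
\big|\tau(TAB)-\tau(TA)\tau(TB)\big|\leq \tau\big(T|A^*-\alpha I|^2\big)^{\frac12}\,\tau\big(T|B-\beta I|^2\big)^{\frac12}
\end{eqnarray*}
valid for all $\alpha,\beta\in\mathbb{C}$.

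It then remains to bound $\tau(T|X|^2)$ for $X\in\mathcal{A}$, and here I would split into the three regimes. Using cyclicity of $\tau$, write $\tau(T|X|^2)=\tau(|X|T|X|)=\|XT^{1/2}\|_2^2$. For (1) I would invoke the generalized H\"older inequality $\|YZ\|_2\leq\|Y\|_4\|Z\|_4$ with $Y=X$, $Z=T^{1/2}$, together with $\|T^{1/2}\|_4^2=\tau(T^2)^{1/2}=\|T\|_2$, to obtain $\tau(T|X|^2)\leq\|X\|_4^2\|T\|_2$. For (2), from $0\leq T\leq\|T\|I$ one gets $|X|T|X|\leq\|T\|\,|X|^2$, whence $\tau(T|X|^2)\leq\|T\|\,\tau(|X|^2)=\|T\|\,\|X\|_2^2$. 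For (3), from $|X|^2\leq\|X\|^2 I$ one gets $\tau(T|X|^2)\leq\|X\|^2\tau(T)=\|X\|^2$. Taking square roots and inserting $X=A^*-\alpha I$ and $X=B-\beta I$ into the master estimate produces the three asserted products.

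Two bookkeeping points close the argument. First, the $A$-factor comes out as $\|A^*-\alpha I\|_\bullet$, and since $\tau(|Y|^p)=\tau(|Y^*|^p)$ we have $\|A^*-\alpha I\|_p=\|A-\overline{\alpha}\,I\|_p$; as $\alpha$ ranges over all of $\mathbb{C}$ I may apply the master estimate with $\overline{\alpha}$ in place of $\alpha$ to land on $\|A-\alpha I\|_p$, while the $B$-factor needs no adjustment. Second, the sharpening of (3) is immediate: since (3) holds for every $\alpha,\beta$, taking the infimum over $\alpha,\beta$ and recalling that $\inf_{\alpha\in\mathbb{C}}\|A-\alpha I\|$ is attained exactly at the points of $\Gamma_A$ gives the optimal constant $\|A-\alpha I\|\,\|B-\beta I\|$ with $\alpha\in\Gamma_A,\ \beta\in\Gamma_B$.

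I expect the only genuinely delicate step to be the factorization $\tau(T|X|^2)=\|XT^{1/2}\|_2^2$ together with the correct calibration of exponents: the identity $\tfrac12=\tfrac14+\tfrac14$ is precisely what forces the index $4$ in (1), and one must confirm that the relevant tools (the generalized H\"older inequality and the monotonicity $\|\cdot\|_p\leq\|\cdot\|_q\leq\|\cdot\|$) are available in the noncommutative $L^p$-setting recorded in the Preliminaries. The conjugation bookkeeping $A^*\leftrightarrow A$ is routine but must be tracked carefully so as to produce $\|A-\alpha I\|$ rather than $\|A^*-\alpha I\|$.
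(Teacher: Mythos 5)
Your proposal is correct, and its skeleton coincides with the paper's: both reduce the theorem to the master estimate $|\tau(TAB)-\tau(TA)\tau(TB)|\leq \tau(T|A^*-\alpha I|^2)^{1/2}\,\tau(T|B-\beta I|^2)^{1/2}$ and then bound $\tau(T|X|^2)$ in the three regimes by exactly the operator inequalities you list, with the same conjugation bookkeeping $\|A^*-\alpha I\|_p=\|A-\overline{\alpha}I\|_p$. The two points where you diverge are minor but worth recording. First, you obtain the master estimate by invoking Theorem \ref{grussp} for the completely positive functional $\varphi(X)=\tau(TX)$, whereas the paper avoids the $3$-positivity machinery entirely: it applies only Lemma \ref{main1} to $\varphi$ and then runs the scalar Cauchy--Schwarz inequality for the semi-inner product $(A,B)_\tau=\tau(TA^*B)-\tau(TA^*)\tau(TB)$; your route is valid (positive functionals are completely positive, as recorded in the Preliminaries) but heavier than necessary. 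Second, for inequality (1) you factor $\tau(T|X|^2)=\|XT^{1/2}\|_2^2$ and call on the generalized H\"older inequality $\|YZ\|_2\leq\|Y\|_4\|Z\|_4$, a tool the paper does not quote; the paper instead gets the same bound $\tau(|X|^2\,T)\leq\tau(|X|^4)^{1/2}\tau(T^2)^{1/2}$ in one line from the Cauchy--Schwarz inequality for the positive functional $\tau$, which is already stated in the Preliminaries. If you wish to stay within the paper's toolkit, replace the H\"older step by that trace Cauchy--Schwarz application (they are equivalent here since $T^{1/2}$ is self-adjoint). Everything else, including the sharpening of (3) via $\Gamma_A,\Gamma_B$, matches the paper.
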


\begin{proof}
We define the map $\varphi :\mathcal{A} \longrightarrow \mathbb{C}$ by $\varphi (A)=\tau (TA).$ It is clear that $\varphi$ is a unital positive linear functional. So, by using Lemma \ref{main1}, we have
\begin{eqnarray}\label{vartrace1}
\tau (TAA^*) - \tau(TA)\tau(TA^*) \leq \tau(T|A^*-\bar{\alpha} I|^2)
\end{eqnarray}
for every $A\in \mathcal{A}$ and $\alpha \in \mathbb{C}$.
Hence, by using the Cauchy--Schwarz inequality and applying the fact that for all $C\in \mathcal{A}$, $\|C\|_p=\|C^*\|_p \ (p\geq 1)$, we get
\begin{eqnarray}
\tau (TAA^*) - \tau(TA)\tau(TA^*) &\leq& \left(\tau(|A^*-\bar{\alpha} I|^4)\right)^\frac{1}{2}\left(\tau(|T|^2)\right)^\frac{1}{2} \\ &=&\|A-\alpha I\|_4^2 \|T\|_2 \label{vartrace}
\end{eqnarray}
for every $\alpha \in \mathbb{C}$. Similarly
\begin{eqnarray}\label{vartrace2}
\tau (TB^*B) - \tau(TB^*)\tau(TB) \leq \|B-\beta I\|_4^2 \|T\|_2
\end{eqnarray}
for every $\beta\in\mathbb{C}.$
Now we define the map $(\cdot ,\cdot)_\tau :\mathcal{A} \times \mathcal{A} \longrightarrow \mathbb{C}$ by $(A,B)_\tau=\tau (TA^*B) - \tau(TA^*)\tau(TB).$
It is easy to check that $(\cdot,\cdot)$ is a usual semi-inner product on $\mathcal{A}$. Using the Cauchy--Schwarz inequality and inequalities (\ref{vartrace}) and (\ref{vartrace2}), we have
\begin{eqnarray*}
\big|\tau(TAB)-\tau(TA)\tau(TB)\big|&=& \big|(A^*,B)_\tau\big| \\ &\leq& (A^*,A^*)_\tau^\frac{1}{2}(B,B)_\tau^\frac{1}{2} \\ &\leq& \|A-\alpha I\|_4\|B-\beta I\|_4\|T\|_2
\end{eqnarray*}
for every $A,B \in \mathcal{A}$ and $\alpha,\beta \in \mathbb{C}.$ This proves the first inequality. \\ Note that in the right side of inequality (\ref{vartrace1}) (and similarly for inequality (\ref{vartrace2})) we can write $|A^*-\bar{\alpha} I| T|A^*-\bar{\alpha} I|\leq |A^*-\bar{\alpha} I|\| T\||A^*-\bar{\alpha} I|$. Hence $\tau (T|A^*-\bar{\alpha} I|^2) \leq \|A-\alpha I\|_2^2 \|T\|.$ Using this fact and applying the Cauchy--Schwarz inequality to $(\cdot,\cdot)_\tau$, we can get the second inequality.\\ Finally, we have
$T^\frac{1}{2}|A^*-\bar{\alpha} I|^2T^\frac{1}{2} \leq T^\frac{1}{2}\|A-\alpha I\|^2T^\frac{1}{2}.$ So $\tau( T|A^*-\bar{\alpha} I|^2) \leq \|A-\alpha I\|^2\tau(T).$ Applying this inequality to the right side of (\ref{vartrace1}) (and similarly for (\ref{vartrace2})) and using the Cauchy--Schwarz inequality we obtain the third inequality.
\end{proof}
\begin{corollary}
Let the assumptions of Theorem \ref{trace1} be held. Then
\begin{enumerate}
\item{$ |\tau(TAB)-\tau(TA)\tau(TB)|\leq \|A-\alpha I\|_p\|B-\beta I\|_q\|T\|_r$} \ $(p,q \geq 4, r\geq 2)$,
\item{$|\tau(TAB)-\tau(TA)\tau(TB)|\leq \|A-\alpha I\|_p\|B-\beta I\|_q \|T\|$} \ {\rm (}$p,q \geq 2${\rm)}
\end{enumerate}
for every $A,B \in \mathcal{A}$ and $\alpha, \beta \in \mathbb{C}$.
\end{corollary}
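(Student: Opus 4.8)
The plan is to derive both inequalities directly from Theorem \ref{trace1}, using only the monotonicity of the noncommutative $L^p$-norms recorded in the Preliminaries, namely that $\|C\|_p\leq\|C\|_q\leq\|C\|$ whenever $p\leq q$ (see \cite{xu}). Since parts (1) and (2) of that theorem already furnish the estimates $|\tau(TAB)-\tau(TA)\tau(TB)|\leq \|A-\alpha I\|_4\|B-\beta I\|_4\|T\|_2$ and $|\tau(TAB)-\tau(TA)\tau(TB)|\leq \|A-\alpha I\|_2\|B-\beta I\|_2\|T\|$, the only task left is to enlarge each factor appearing on the right-hand sides.

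For the first inequality I would fix $p,q\geq 4$ and $r\geq 2$ and apply the norm monotonicity to each of the three factors: from $4\leq p$ we get $\|A-\alpha I\|_4\leq \|A-\alpha I\|_p$, from $4\leq q$ we get $\|B-\beta I\|_4\leq \|B-\beta I\|_q$, and from $2\leq r$ we get $\|T\|_2\leq \|T\|_r$. Chaining these bounds with Theorem \ref{trace1}(1) yields inequality (1). The second inequality is obtained in the same way: for $p,q\geq 2$ I would use $\|A-\alpha I\|_2\leq \|A-\alpha I\|_p$ and $\|B-\beta I\|_2\leq \|B-\beta I\|_q$, leave the factor $\|T\|$ unchanged, and combine with Theorem \ref{trace1}(2).

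I do not expect any genuine obstacle, since the corollary is a routine strengthening in which the fixed exponents $4$ and $2$ of Theorem \ref{trace1} are replaced by the larger exponents permitted by the hypotheses. The single point worth a moment's care is the direction of the inequality: one must use that increasing the exponent \emph{increases} the $L^p(\tau)$-norm, which is precisely the ordering $\|C\|_p\leq\|C\|_q$ for $p\leq q$, applied to $C=A-\alpha I$, $C=B-\beta I$ and $C=T$.
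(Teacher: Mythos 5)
Your proposal is correct and is exactly the paper's argument: the authors also deduce the corollary from Theorem \ref{trace1} by invoking the monotonicity $\|C\|_p\leq\|C\|_q\leq\|C\|$ for $p\leq q$ to enlarge each factor on the right-hand side. Nothing is missing.
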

\begin{proof}
It is sufficient to apply Theorem \ref{trace1} and to note the fact that $\|C\|_p\leq \|C\|_q$, whenever $C\in \mathcal{A}$ and $1 \leq p \leq q$.
\end{proof}
Now we give an example to show that the right sides of inequalities (1) and (2) in Theorem \ref{trace1} are not comparable.
\begin{example}
Let $\tau$ be the usual trace on $M_2(\mathbb{C})$ given by $\tau ([a_{ij}])=\dfrac{a_{11}+a_{22}}{2}$. First consider $A=\begin{bmatrix} 3 & 0.5 \\ 0.5 & 2 \end{bmatrix}, B= \begin{bmatrix} 1 & 2\\ 2 & 4 \end{bmatrix}$ and $T=\begin{bmatrix} 1 & -0.1 \\ -0.1 & 1 \end{bmatrix}$. In this case we have
\begin{equation*}
\|A\|_4=2.76 ,\ \|B\|_4=4.20 ,\ \|T\|_2=1.004,
\ \|A\|_2=2.59 ,\ \|B\|_2=3.53 ,\ \|T\|=1.10 .
\end{equation*}
Hence, we see that
\begin{eqnarray*}
\|A\|_4 \|B\|_4 \|T\|_2= 11.63 >10.05=\|A\|_2 \|B\|_2 \|T\|.
\end{eqnarray*}
On the other hand, if we take $A=\begin{bmatrix} 1.5 &2 \\ 2&5 \end{bmatrix}, B=\begin{bmatrix} 3 &2 \\ 2&4 \end{bmatrix}$ and $T=\begin{bmatrix} 1 &-1 \\ -1&2\end{bmatrix}$, we get
\begin{equation*}
\ \ \|A\|_4=4.96,\ \|B\|_4=4.68 ,\ \|T\|_2=1.87,
\ \|A\|_2=4.19,\ \|B\|_2=4.06,\ \|T\|=2.61 .
\end{equation*}
Therefore,
\begin{eqnarray*}
\|A\|_4 \|B\|_4 \|T\|_2= 43.40 <44.39=\|A\|_2 \|B\|_2 \|T\|.
\end{eqnarray*}
So, none of inequalities (1) and (2) can imply the other.\\
\end{example}

Let $\alpha, \beta \in\mathbb{C}$ and $\tau \big(C_{\alpha,\beta}(A)\big)$ be accretive. According to (\ref{eqacc}), this condition is equivalent to
\begin{eqnarray}\label{tracac}
\tau \left(\left|A -\frac{\alpha+\beta}{2}\right|\right)=\left\|A -\frac{\alpha+\beta}{2}\right\|_1 \leq \frac{1}{2} \left|\beta -\alpha \right|.
\end{eqnarray}
Clearly, this condition is weaker than accretivity of $C_{\alpha,\beta}(A)$. Moreover, the accretivity of $\tau \big(C_{\alpha,\beta}(A)\big)$ means that $ -\dfrac{\pi}{2}\leq{\rm Arg} \big(\tau \big(C_{\alpha,\beta}(A)\big)\big)\leq \dfrac{\pi}{2}$, where ${\rm Arg} (z)$ is the argument of the complex number $z$.\\
If $0\leq A\leq B$, then it is easy to check that $\tau (A^{2^n}) \leq \tau (B^{2^n})$ for every $n\in \mathbb{R}$.
Indeed, \begin{equation*}
 \tau (B^2-A^2)=\tau \big((B-A)(B+A)\big) =\tau \big((B-A)^{\frac{1}{2}}(B+A)(B-A)^{\frac{1}{2}}\big) \geq 0.
\end{equation*}
 Hence, by using this fact and applying inequality (\ref{tracac}), we get
\begin{eqnarray}\label{acc4}
\left\|A -\frac{\alpha+\beta}{2}\right\|_4 \leq \frac{1}{2} \left|\beta -\alpha \right|.
\end{eqnarray}
Next corollary gives a Gr\"{u}ss type inequality for some accretive operators for the trace on a noncommutative probability space.

\begin{corollary}
Let $(\mathcal{A},\tau)$ be a noncommutative probability space and $T \in \mathcal{A}$ be a density operator. Let $\alpha,\beta,\zeta,\xi \in \mathbb{C}$ and $-\dfrac{\pi}{2}\leq {\rm Arg} \big(\tau \big(C_{\alpha,\beta}(A)\big)\big)\leq\dfrac{\pi}{2}$ and $-\dfrac{\pi}{2}\leq {\rm Arg} \big(\tau \big(C_{\zeta,\xi}(A)\big)\big)\leq\dfrac{\pi}{2}$. Then
\begin{eqnarray*}
|\tau(TAB)-\tau(TA)\tau(TB)|\leq \frac{1}{4}|\beta-\alpha| |\xi-\zeta| \|T\|_2.
\end{eqnarray*}
\end{corollary}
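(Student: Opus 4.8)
The plan is to reduce the claim to part (1) of Theorem~\ref{trace1} by choosing the free scalar parameters there to be the midpoints of the given data. First I would note that the hypothesis $-\frac{\pi}{2}\leq {\rm Arg}\big(\tau\big(C_{\alpha,\beta}(A)\big)\big)\leq \frac{\pi}{2}$ is precisely the statement that the complex number $\tau\big(C_{\alpha,\beta}(A)\big)$ is accretive, so by the equivalence recorded in (\ref{tracac}) it yields $\big\|A-\frac{\alpha+\beta}{2}I\big\|_1\leq\frac{1}{2}|\beta-\alpha|$. Upgrading this $1$-norm bound to a $4$-norm bound through inequality (\ref{acc4}) then gives $\big\|A-\frac{\alpha+\beta}{2}I\big\|_4\leq\frac{1}{2}|\beta-\alpha|$.

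Applying the same reasoning to $B$, using the accretivity of $\tau\big(C_{\zeta,\xi}(B)\big)$, produces $\big\|B-\frac{\zeta+\xi}{2}I\big\|_4\leq\frac{1}{2}|\xi-\zeta|$.

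Next I would invoke inequality (1) of Theorem~\ref{trace1}, which holds for an arbitrary choice of the two scalar parameters. Selecting the first to be $\frac{\alpha+\beta}{2}$ and the second to be $\frac{\zeta+\xi}{2}$, I obtain
$$\big|\tau(TAB)-\tau(TA)\tau(TB)\big|\leq \Big\|A-\tfrac{\alpha+\beta}{2}I\Big\|_4\,\Big\|B-\tfrac{\zeta+\xi}{2}I\Big\|_4\,\|T\|_2 .$$
Inserting the two norm estimates just derived immediately gives the bound $\frac{1}{4}|\beta-\alpha|\,|\xi-\zeta|\,\|T\|_2$, which is the assertion.

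There is no genuine obstacle here; the only step demanding a little care is the passage from the $1$-norm control to the $4$-norm control via (\ref{acc4}), which rests on the monotonicity $\tau(X^{2^n})\leq\tau(Y^{2^n})$ for $0\leq X\leq Y$ noted just before that inequality. I would also flag what appears to be a typographical slip in the statement: the second accretivity hypothesis should read $\tau\big(C_{\zeta,\xi}(B)\big)$ rather than $\tau\big(C_{\zeta,\xi}(A)\big)$, since it is the quantity that governs the deviation of $B$ from the midpoint $\frac{\zeta+\xi}{2}$.
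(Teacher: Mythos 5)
Your proposal is correct and follows essentially the same route as the paper: the paper's own proof simply re-derives inequality (1) of Theorem \ref{trace1} with the parameters chosen as the midpoints $\frac{\alpha+\beta}{2}$ and $\frac{\zeta+\xi}{2}$, combining (\ref{vartrace}), (\ref{acc4}) and the Cauchy--Schwarz inequality for $(\cdot,\cdot)_\tau$, which is exactly what your citation of the theorem packages. Your observation that the second hypothesis should involve $\tau\big(C_{\zeta,\xi}(B)\big)$ rather than $\tau\big(C_{\zeta,\xi}(A)\big)$ is also correct.
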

\begin{proof}
Using inequalities (\ref{vartrace}) and (\ref{acc4}), we get
\begin{eqnarray*}
\tau(TAA^*)-\tau(TA)\tau(TA^*)&\leq& \left\|A-\dfrac{\alpha+\beta}{2}\right\|_4^2 \|T\|_2 \\ &\leq& \dfrac{1}{4}|\beta -\alpha|^2 \|T\|_2.
\end{eqnarray*}
Similarly, we can write
\begin{eqnarray*}
\tau(TB^*B)-\tau(TB^*)\tau(TB)\leq \dfrac{1}{4}|\xi -\zeta |^2\|T\|_2.
\end{eqnarray*}
Therefore, by using the Cauchy--Schwarz inequality for $(\cdot,\cdot)_\tau$ (defined in Theorem \ref{trace1}), we conclude that
\begin{eqnarray*}
|(A^*,B)_\tau| &=&|\tau(TAB)-\tau(TA)\tau(TB)| \\ &\leq& (A^*,A^*)_\tau^\frac{1}{2} (B,B)_\tau^\frac{1}{2}\\ &\leq& \frac{1}{4}|\beta-\alpha| |\xi-\zeta| \|T\|_2.
\end{eqnarray*}
\end{proof}
We can replace the unital condition for a positive linear map $\Phi$ in Theorem \ref{grussp} by another mild condition. To achieve our result, first we give the following lemma.

\begin{lemma}\label{nounit}
Let $\mathcal{A}$ and $\mathcal{B}$ be unital $C^*$-algebras. If $\Phi :\mathcal{A} \longrightarrow \mathcal{B}$ is a positive linear map such that $\Phi(I)$ is invertible, then
\begin{eqnarray}
\Phi(A^*A)-\Phi(A)^*\Phi(I)^{-1}\Phi(A) \leq \Phi(|A-\alpha I|^2)
\end{eqnarray}
for all $A\in\mathcal{A}$ and $\alpha \in \mathbb{C} .$

\end{lemma}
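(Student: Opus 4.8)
The plan is to reduce the statement to the already-proved unital case, Lemma~\ref{main1}, by renormalizing $\Phi$ so that it becomes unital. Since $I\geq 0$ and $\Phi$ is positive, the element $\Phi(I)$ is positive; by hypothesis it is invertible, hence strictly positive, so its square root $\Phi(I)^{1/2}$ and the inverse square root $\Phi(I)^{-1/2}$ exist as self-adjoint (positive) elements of $\mathcal{B}$.

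First I would introduce the map $\Psi:\mathcal{A}\longrightarrow\mathcal{B}$ defined by $\Psi(X)=\Phi(I)^{-1/2}\Phi(X)\Phi(I)^{-1/2}$ and check that it satisfies the hypotheses of Lemma~\ref{main1}. Linearity is immediate; positivity follows because $X\geq 0$ forces $\Phi(X)\geq 0$ and congruence by the self-adjoint element $\Phi(I)^{-1/2}$ preserves positivity; $*$-linearity holds because every positive linear map between $C^*$-algebras is automatically $*$-linear and $\Phi(I)^{-1/2}$ is self-adjoint; and the normalization makes $\Psi$ unital, since $\Psi(I)=\Phi(I)^{-1/2}\Phi(I)\Phi(I)^{-1/2}=I$.

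Applying Lemma~\ref{main1} to $\Psi$ then gives
\begin{eqnarray*}
\Psi(A^*A)-\Psi(A)^*\Psi(A)\leq \Psi(|A-\alpha I|^2)
\end{eqnarray*}
for all $A$ and $\alpha$. Substituting the definition of $\Psi$ and noting that $\Psi(A)^*\Psi(A)=\Phi(I)^{-1/2}\Phi(A)^*\Phi(I)^{-1}\Phi(A)\Phi(I)^{-1/2}$, every term in this inequality has the form $\Phi(I)^{-1/2}(\,\cdot\,)\Phi(I)^{-1/2}$. The final step is to conjugate the whole inequality by $\Phi(I)^{1/2}$, using the standard fact that $X\leq Y$ implies $C^*XC\leq C^*YC$ for any $C$ (here $C=\Phi(I)^{1/2}$). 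The outer factors cancel, and the inequality collapses to exactly $\Phi(A^*A)-\Phi(A)^*\Phi(I)^{-1}\Phi(A)\leq \Phi(|A-\alpha I|^2)$, as desired.

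I do not expect a serious obstacle: the only points requiring care are verifying that $\Psi$ is genuinely unital, positive and $*$-linear (so that Lemma~\ref{main1} applies) and the bookkeeping of the $\Phi(I)^{\pm 1/2}$ factors when conjugating back. Alternatively, one could avoid the reduction and argue directly, replacing the nonnegative term $\Phi(A-\alpha I)^*\Phi(A-\alpha I)$ used in the proof of Lemma~\ref{main1} by $\big(\Phi(A)-\alpha\Phi(I)\big)^*\Phi(I)^{-1}\big(\Phi(A)-\alpha\Phi(I)\big)\geq 0$ and expanding; the cross terms cancel in the same way and produce the stated bound.
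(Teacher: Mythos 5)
Your proposal is correct and follows exactly the paper's own argument: the authors also define $\Psi(X)=\Phi(I)^{-1/2}\Phi(X)\Phi(I)^{-1/2}$, observe it is a unital positive linear map, apply Lemma~\ref{main1}, and conjugate back by $\Phi(I)^{1/2}$. Nothing to add.
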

\begin{proof}
Since $\Phi(I)>0$, the map $\Psi :\mathcal{A} \longrightarrow \mathcal{B}$ given by $\Psi (A)= \Phi(I)^{-\frac{1}{2}}\Phi(A)\Phi(I)^{-\frac{1}{2}}$ is a unital positive linear map. Employing Lemma \ref{main1} we infer that
\begin{eqnarray*}
\Phi(I)^{-\frac{1}{2}}\Phi(A^*A)\Phi(I)^{-\frac{1}{2}}&-&\Phi(I)^{-\frac{1}{2}}\Phi(A)^*\Phi(I)^{-1}\Phi(A) \Phi(I)^{-\frac{1}{2}}\\ &=&\Psi(A^*A)-\Psi(A)^*\Psi(A) \\ &\leq& \Psi(|A-\alpha I|^2)\\ &=&\Phi(I)^{-\frac{1}{2}} \Phi(|A-\alpha I|^2) \Phi(I)^{-\frac{1}{2}}
\end{eqnarray*}
for all $\alpha \in \mathbb{C.}$ Therefore, we can write
\begin{eqnarray*}
 \Phi(A^*A) -\Phi(A)^*\Phi(I)^{-1}\Phi(A) \leq \Phi(|A-\alpha I|^2).
\end{eqnarray*}

\end{proof}
Next, we give another version of Theorem \ref{grussp} by weakening the unital condition.
\begin{proposition}
Let $\mathcal{A}$ and $\mathcal{B}$ be unital $C^*$-algebras. If $\Phi :\mathcal{A} \longrightarrow \mathcal{B}$ is a $n$-positive linear map for some $n\geq 3$ such that $\Phi(I)$ is invertible, then
\begin{eqnarray}
\big|\Phi(AB)-\Phi(A)\Phi(I)^{-1}\Phi(B) \big|\leq \big\|\Phi(|A^*-\alpha I|^2)\big\|^\frac{1}{2} \big( \Phi(|B-\beta I|)^2)\big)^\frac{1}{2}
\end{eqnarray}
for every $A,B\in\mathcal{A}$ and $\alpha, \beta \in \mathbb{C}.$
\end{proposition}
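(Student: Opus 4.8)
The plan is to mirror the proof of Theorem~\ref{grussp}, replacing the unital normalisation by a Schur-complement argument with respect to the invertible positive element $\Phi(I)$, in the same spirit that Lemma~\ref{nounit} adapts Lemma~\ref{main1}. First I would start from the factorisation used in the proof of Lemma~\ref{matrixvar}, namely that the $3\times 3$ block matrix with entries $A^*A, A^*B, A^*, \dots$ is positive, and apply the $3$-positivity of $\Phi$ (which follows from $n$-positivity with $n\geq 3$). This yields
\[\begin{bmatrix} \Phi(A^*A) & \Phi(A^*B) & \Phi(A^*) \\ \Phi(B^*A) & \Phi(B^*B) & \Phi(B^*) \\ \Phi(A) & \Phi(B) & \Phi(I) \end{bmatrix} \geq 0.\]
The only difference from the unital case is that the lower-right corner is now $\Phi(I)$ instead of $I$. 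Since $\Phi(I)$ is positive and invertible, I would take the Schur complement with respect to this corner via Lemma~\ref{matrixp}, viewing the array as a $2\times 2$ block matrix whose $(2,2)$ entry is $\Phi(I)$ and whose off-diagonal blocks are $[\Phi(A^*),\Phi(B^*)]^{\mathsf t}$ and $[\Phi(A),\Phi(B)]$. Using that positive linear maps are $*$-linear, so $\Phi(A^*)=\Phi(A)^*$ and $\Phi(B^*)=\Phi(B)^*$, this gives the non-unital analogue of Lemma~\ref{matrixvar},
\[\begin{bmatrix} \Phi(A^*A)-\Phi(A)^*\Phi(I)^{-1}\Phi(A) & \Phi(A^*B)-\Phi(A)^*\Phi(I)^{-1}\Phi(B) \\ \Phi(B^*A)-\Phi(B)^*\Phi(I)^{-1}\Phi(A) & \Phi(B^*B)-\Phi(B)^*\Phi(I)^{-1}\Phi(B) \end{bmatrix} \geq 0.\]

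From here the argument follows Theorem~\ref{grussp} essentially verbatim. Assuming without loss of generality that $\Phi(A^*A)-\Phi(A)^*\Phi(I)^{-1}\Phi(A)>0$, I would apply Lemma~\ref{matrixp} once more to the block matrix above and then replace the inverse of the $(1,1)$ entry by the reciprocal of its norm, obtaining
\[\big|\Phi(A^*B)-\Phi(A)^*\Phi(I)^{-1}\Phi(B)\big|^2 \leq \big\|\Phi(A^*A)-\Phi(A)^*\Phi(I)^{-1}\Phi(A)\big\|\big(\Phi(B^*B)-\Phi(B)^*\Phi(I)^{-1}\Phi(B)\big).\]
Next I would invoke Lemma~\ref{nounit} to bound each diagonal term, namely $\Phi(A^*A)-\Phi(A)^*\Phi(I)^{-1}\Phi(A)\leq \Phi(|A-\alpha I|^2)$ and the analogous estimate for $B$, pass to norms on the first factor, and take square roots. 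Replacing $A$ by $A^*$ and using $\Phi(A^*)^*=\Phi(A)$ then produces the claimed inequality, with $|A^*-\alpha I|^2$ appearing on the right.

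The only genuinely new point compared with Theorem~\ref{grussp} is the Schur-complement step carried out with the corner $\Phi(I)$ in place of $I$; this is exactly where the invertibility hypothesis on $\Phi(I)$ enters, and it plays the role of the congruence by $\Phi(I)^{-\frac{1}{2}}$ used in the proof of Lemma~\ref{nounit}. Everything else is routine bookkeeping. The one detail I would be careful to verify is that positivity of $\Phi$ indeed forces $\Phi(A^*)=\Phi(A)^*$, so that the off-diagonal Schur-complement blocks have precisely the stated form $\Phi(A)^*\Phi(I)^{-1}\Phi(B)$; this is standard for positive linear maps between $C^*$-algebras and causes no difficulty.
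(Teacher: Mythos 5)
Your proposal is correct and follows essentially the same route as the paper: both pass the positive $3\times 3$ block matrix through the $3$-positivity of $\Phi$, take a Schur complement against the invertible corner $\Phi(I)$ to obtain the $2\times 2$ matrix of "generalized covariances," and then repeat the argument of Theorem \ref{grussp} using Lemma \ref{nounit} in place of Lemma \ref{main1}. The only cosmetic difference is that the paper pads the $3\times 3$ array to a $4\times 4$ one (with an extra $\Phi(I)$ block) so that Lemma \ref{matrixp} applies with equal-sized blocks, whereas you take the Schur complement directly against the $1\times 1$ corner; this changes nothing of substance.
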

\begin{proof}
From inequality (\ref{matrixp2}) and the $3$-positivity of $\Phi$, we deduce that
\begin{eqnarray*}
\begin{bmatrix}  \Phi(A^*A) & \Phi(A^*B) & \Phi(A^*) & 0  \\ \Phi(B^*A)  & \Phi(B^*B) &\Phi (B^*) & 0 \\ \Phi(A) &\Phi(B) & \Phi(I) & 0 \\ 0 & 0 & 0 & \Phi(I)
\end{bmatrix} \geq 0.
\end{eqnarray*}
Hence, the positivity of the latter matrix ensures that
\begin{eqnarray*}
\begin{bmatrix}  \Phi(A^*A) & \Phi(A^*B) \\ \Phi(B^*A)  & \Phi(B^*B)
\end{bmatrix} \geq \begin{bmatrix}  \Phi(A)^* & 0 \\ \Phi(B)^* & 0\end{bmatrix} \begin{bmatrix} \Phi(I)^{-1} & 0\\ 0 & \Phi(I)^{-1}
\end{bmatrix} \begin{bmatrix}  \Phi(A) & \Phi(B) \\ 0  & 0\end{bmatrix} .
\end{eqnarray*}
Therefore,
\begin{eqnarray*}
\begin{bmatrix}  \Phi(A^*A)-\Phi(A)^*\Phi(I)^{-1}\Phi(A) & \Phi(A^*B)-\Phi(A)^*\Phi(I)^{-1}\Phi(B)  \\ \Phi(B^*A)-\Phi(B)^*\Phi(I)^{-1}\Phi(A)   & \Phi(B^*B)-\Phi(B)^*\Phi(I)^{-1}\Phi(B)
\end{bmatrix} \geq 0 .
\end{eqnarray*}
Using Lemma \ref{nounit} and applying a similar argument as in the proof of Theorem \ref{grussp}, we get
 \begin{eqnarray*}
 \big|\Phi(AB)-\Phi(A)\Phi(I)^{-1}\Phi(B) \big|\leq \big\|\Phi(|A^*-\alpha I|^2)\big\|^\frac{1}{2} \big( \Phi(|B-\beta I|^2)\big)^\frac{1}{2}.
\end{eqnarray*}
\end{proof}

\section{Gr\"{u}ss type inequalities in Hilbert $C^*$-modules}\label{modulesection}

In this section, we give some Gr\"{u}ss type inequalities in the setting of Hilbert $C^*$-modules. There is a standard way for making a Hilbert $C^*$-module from a semi-inner product module, so we may work in the realm of Hilbert $C^*$-modules; see \cite[page 3-4]{lan}.\\
The range of an operator $T$ is denoted by ${\rm ran}(T)$. Let $T\in \mathcal{L}(\mathcal{X})$ be a positive operator. For $x,y \in \mathcal{X}$ we define
 \begin{eqnarray*}
 [x,y]_T =\langle Tx,y \rangle.
 \end{eqnarray*}
 It is easy to verify that $[x,y]_T$ is an $\mathcal{A}$-valued semi-inner product on $\mathcal{X}$. This means that $[x,x]_T=0 \Rightarrow x=0$ does not hold in general.

\begin{lemma}\label{lemgrussmodule}
Let $K\in \mathcal{L}(\mathcal{X})$ be a projection and $x,u,v\in \mathcal{X}.$ If $u+v \in {\rm{ran}} (K)$, then
\begin{eqnarray*}
[x,x]_{(I-K)}=|x|^2-\langle Kx,x\rangle \leq \frac{1}{4} |v-u|^2 -{\rm Re} \langle x-u,v-x \rangle.
\end{eqnarray*}
In particular, if ${\rm Re} \langle x-u,v-x \rangle\geq 0,$ then
\begin{eqnarray*}
|x|^2-\langle Kx,x\rangle \leq \frac{1}{4} |v-u|^2.
\end{eqnarray*}
\end{lemma}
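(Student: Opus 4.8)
The plan is to reduce the stated estimate to a single positivity assertion of the form $|Kx-p|^2\ge 0$, where $p=\tfrac{u+v}{2}$, and then unwind it.

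First I would record the purely algebraic identity, valid in any Hilbert $C^*$-module, that
\[
\frac14|v-u|^2-\mathrm{Re}\langle x-u,v-x\rangle=\left|x-\frac{u+v}{2}\right|^2 .
\]
This is the module analogue of the identity $\mathrm{Re}\,C_{S,T}(A)=\tfrac14|T-S|^2-\big|A-\tfrac{S+T}{2}\big|^2$ already used in the paper. To prove it I would set $w=x-\tfrac{u+v}{2}$ and $c=\tfrac{v-u}{2}$, note $x-u=w+c$ and $v-x=c-w$, expand $\langle w+c,c-w\rangle$ by additivity, and observe that the cross terms satisfy $\langle w,c\rangle-\langle c,w\rangle=\langle w,c\rangle-\langle w,c\rangle^*$, which is anti-self-adjoint and hence contributes nothing to the real part, leaving $\mathrm{Re}\langle x-u,v-x\rangle=|c|^2-|w|^2$.

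With this identity in hand the claim becomes the single inequality $|x|^2-\langle Kx,x\rangle\le|x-p|^2$. Here I would use that $K$ is a projection in two ways: first, writing $\langle Kx,x\rangle=\langle K^2x,x\rangle=\langle Kx,K^*x\rangle=|Kx|^2$; and second, translating the hypothesis $u+v\in\mathrm{ran}(K)$ into $Kp=p$, since every element of the range of a projection is fixed by it. These two observations are the crux of the argument, and I expect the transition from $u+v\in\mathrm{ran}(K)$ to the fixed-point relation $Kp=p$ to be the main conceptual step.

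Finally I would expand the manifestly positive element
\[
|Kx-p|^2=|Kx|^2-\langle Kx,p\rangle-\langle p,Kx\rangle+|p|^2\ge 0,
\]
and use $Kp=p$ together with $K=K^*$ to rewrite $\langle Kx,p\rangle=\langle x,Kp\rangle=\langle x,p\rangle$ and similarly $\langle p,Kx\rangle=\langle p,x\rangle$. This yields $|Kx|^2\ge\langle x,p\rangle+\langle p,x\rangle-|p|^2$, which rearranges to $|x|^2-\langle Kx,x\rangle\le|x|^2-\langle x,p\rangle-\langle p,x\rangle+|p|^2=|x-p|^2$, exactly the required estimate; combining it with the identity above proves the main inequality. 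The \emph{in particular} statement then follows at once by discarding the nonnegative term $\mathrm{Re}\langle x-u,v-x\rangle$. I anticipate the only delicate bookkeeping to be the correct handling of the $\mathcal{A}$-valued inner product conventions (which slot is linear, and the relation $\langle y,x\rangle=\langle x,y\rangle^*$), but these enter solely through the elementary adjoint manipulations described.
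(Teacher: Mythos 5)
Your proposal is correct and follows essentially the same route as the paper: both arguments reduce the claim to $|x|^2-\langle Kx,x\rangle\leq\left|x-\tfrac{u+v}{2}\right|^2$ via the positivity of $\left|Kx-\tfrac{u+v}{2}\right|^2$ together with the fixed-point relation $K\bigl(\tfrac{u+v}{2}\bigr)=\tfrac{u+v}{2}$, and then invoke the algebraic identity $\mathrm{Re}\langle x-u,v-x\rangle=\tfrac14|v-u|^2-\left|x-\tfrac{u+v}{2}\right|^2$. The only cosmetic difference is that the paper writes the key positivity as $|x-w|^2\geq|x-w|^2-|Kx-w|^2$ and verifies the right-hand side equals $|x|^2-\langle Kx,x\rangle$, whereas you expand $|Kx-p|^2\geq 0$ directly; these are the same computation.
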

\begin{proof}
First we show that
\begin{eqnarray*}\label{pos}
|x|^2-\langle Kx,x\rangle\leq |x-w|^2
\end{eqnarray*}
for every $w \in {\rm ran}(K)$. Indeed
\begin{eqnarray*}
|x-w|^2&\geq& |x-w|^2-|Kx-w|^2 \\ &=& \langle x-w,x-w\rangle -\langle Kx-w,Kx-w\rangle \\ &=& \langle x,x\rangle -\langle x,w\rangle - \langle w,x\rangle + \langle w,w\rangle- \langle Kx,Kx\rangle \\ &&+\langle Kx,w\rangle +\langle w,Kx\rangle -\langle w,w\rangle \\ &=& \langle x,x\rangle -\langle x,w\rangle - \langle w,x\rangle - \langle Kx,x\rangle \\ && +\langle x,Kw\rangle +\langle Kw,x\rangle \ \ ({\rm beacuse}\ w \in {\rm ran} (K)) \\ &=& |x|^2-\langle Kx,x\rangle.
\end{eqnarray*}
Due to $\dfrac{u+v}{2}\in {\rm ran}(K)$, we have
\begin{eqnarray*}
|x|^2-\langle Kx,x\rangle\leq \left|x-\dfrac{u+v}{2}\right|^2.
\end{eqnarray*}
Using the equality ${\rm Re}\langle x-u,v-x\rangle= \dfrac{1}{4}|v-u|^2 -\left|x -\dfrac{u+v}{2}\right|^2,$ we can write
\begin{eqnarray*}
|x|^2-\langle Kx,x\rangle &\leq& \left|x-\dfrac{u+v}{2}\right|^2 - \frac{1}{4}|v-u|^2 + \frac{1}{4}|v-u|^2 \\ &=& \frac{1}{4}|v-u|^2 - {\rm Re}\langle x-u,v-x\rangle.
\end{eqnarray*}
\end{proof}

Our main result of this section reads as follows.

\begin{theorem}\label{grussmodule}
Suppose that $\left(\mathcal{X},\langle \cdot ,\cdot \rangle\right )$ is a Hilbert module over a $C^*$-algebra $\mathcal{A}$ and $\{x,y,u,v,u',v'\}\subseteq \mathcal{X}.$
 Let \begin{eqnarray*} {\rm Re}\langle x-u,v-x\rangle \geq 0,\ {\rm Re} \langle y-u',v'-y\rangle \geq 0\end{eqnarray*} and
$K\in \mathcal{L}(\mathcal{X})$ be a projection such that $u+v, u'+v' \in {\rm{ran}}(K)$. Then
\begin{eqnarray*}
\big|\langle x ,y \rangle -\langle Kx,y\rangle \big| &\leq& \left\|\frac{1}{4}|v-u|^2 -{\rm Re}\langle x-u,v-x\rangle \right\|^\frac{1}{2} \\ && \cdot \left( \frac{1}{4} |v'-u'|^2 -{\rm Re}\langle y-u',v'-y\rangle \right)^\frac{1}{2}\\ &\leq& \frac{1}{4} \left\|v-u \right\| \left| v'-u' \right|.
\end{eqnarray*}
\end{theorem}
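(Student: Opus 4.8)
The plan is to mimic the Cauchy--Schwarz-via-block-matrix strategy used for Theorem \ref{grussp}, but now working entirely inside the semi-inner product structure furnished by the positive operator $I-K$. Recall from the preliminaries that for any positive $T \in \mathcal{L}(\mathcal{X})$ the assignment $[x,y]_T = \langle Tx,y\rangle$ is an $\mathcal{A}$-valued semi-inner product. Here I take $T = I-K$, which is positive because $K$ is a projection. The first step is therefore to observe that $\langle x,y\rangle - \langle Kx,y\rangle = \langle (I-K)x,y\rangle = [x,y]_{I-K}$, so that the left-hand quantity is simply the absolute value of a single semi-inner product.

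Next I would invoke the Cauchy--Schwarz inequality for the $\mathcal{A}$-valued semi-inner product $[\cdot,\cdot]_{I-K}$. The version recorded in the preliminaries for Hilbert $C^*$-modules gives, for the semi-inner product module $(\mathcal{X},[\cdot,\cdot]_{I-K})$,
\begin{eqnarray*}
[x,y]_{I-K}\,[y,x]_{I-K} \leq \big\|[x,x]_{I-K}\big\|\,[y,y]_{I-K},
\end{eqnarray*}
and hence, taking the operator square root of this positive-element inequality,
\begin{eqnarray*}
\big|\,[x,y]_{I-K}\,\big| \leq \big\|[x,x]_{I-K}\big\|^{\frac{1}{2}}\,\big([y,y]_{I-K}\big)^{\frac{1}{2}}.
\end{eqnarray*}
Rewriting the diagonal terms, this reads $\big|\langle x,y\rangle - \langle Kx,y\rangle\big| \leq \big\||x|^2-\langle Kx,x\rangle\big\|^{\frac{1}{2}}\big(|y|^2-\langle Ky,y\rangle\big)^{\frac{1}{2}}$, which is the middle expression of the theorem once the two diagonal quantities are bounded.

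To finish, I would apply Lemma \ref{lemgrussmodule} twice. For the first factor, the hypotheses $u+v\in{\rm ran}(K)$ and ${\rm Re}\langle x-u,v-x\rangle\geq 0$ give
\begin{eqnarray*}
|x|^2-\langle Kx,x\rangle \leq \frac{1}{4}|v-u|^2 - {\rm Re}\langle x-u,v-x\rangle,
\end{eqnarray*}
and applying the lemma with $(y,u',v')$ in place of $(x,u,v)$ gives the analogous bound for $|y|^2-\langle Ky,y\rangle$. Substituting these two bounds into the Cauchy--Schwarz estimate yields the first inequality of the theorem; dropping the (positive) ${\rm Re}$-terms and bounding $\big\|\tfrac14|v-u|^2\big\|^{1/2}=\tfrac12\|v-u\|$ produces the second, coarser inequality.

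The main subtlety I anticipate is justifying the passage from the scalar-case argument to the module setting: one must be careful that the Cauchy--Schwarz inequality being used is the genuine $\mathcal{A}$-valued one (with the norm appearing on exactly one factor, as stated in the preliminaries), and that taking square roots is legitimate for the positive elements involved. The monotonicity $\big\||x|^2-\langle Kx,x\rangle\big\|^{1/2}$ versus the right-hand norm must respect operator-monotonicity of the square root together with norm-monotonicity, so I would verify that the inequality $|x|^2-\langle Kx,x\rangle \leq \frac14|v-u|^2-{\rm Re}\langle x-u,v-x\rangle$ is indeed an inequality of positive elements of $\mathcal{A}$ before taking norms and roots. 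Everything else is a direct transcription of the Theorem \ref{grussp} argument into this framework.
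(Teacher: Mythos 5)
Your proposal is correct and follows essentially the same route as the paper's own proof: rewrite the left side as $\big|[x,y]_{(I-K)}\big|$, apply the $\mathcal{A}$-valued Cauchy--Schwarz inequality for this semi-inner product, and then bound the two diagonal terms by two applications of Lemma \ref{lemgrussmodule}, dropping the nonnegative real parts for the coarser estimate. (Only cosmetic point: in your intermediate display the product should be $[y,x]_{I-K}[x,y]_{I-K}=\big|[x,y]_{I-K}\big|^2$ so that the norm lands on the $x$-factor, but this is just the usual swap of roles in the stated Cauchy--Schwarz inequality.)
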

\begin{proof} The Cauchy--Schwarz inequality and Lemma \ref{lemgrussmodule} yield that
\begin{eqnarray*}
\big|\langle x ,y \rangle -\langle Kx,y\rangle \big| &=& \left|[x,y]_{(I-K)}\right| \\ & \leq& \left\|[x,x]_{(I-K)}\right\|^\frac{1}{2} \left|[y,y]_{(I-K)}\right|^\frac{1}{2} \\& \leq& \left\|\frac{1}{4}|v-u|^2 -{\rm Re} \langle x-u,v-x \rangle \right\|^\frac{1}{2}\\ && \cdot \left(\frac{1}{4}|v'-u'|^2 -{\rm Re} \langle y-u',v'-y \rangle\right)^\frac{1}{2}\\ \\ &\leq& \frac{1}{4} \left\||v-u |\right\| \left| v'-u' \right|\\ &=& \frac{1}{4} \left\|v-u \right\| \left| v'-u' \right|.
\end{eqnarray*}
\end{proof}

As a corollary, we have the following Gr\"{u}ss type inequality in framework of Hilbert $C^*$-modules (see \cite[Theorem 4.1]{il1} and \cite[Theorem 5.1]{il2}).
\begin{corollary}\label{lift}
Let $\mathcal{X}$ be a Hilbert module over a $C^*$-algebra $\mathcal{A}$ and $h\in\mathcal{X}$ be a lifted projection. If $a,A,b,B \in \mathcal{A}$ and $x,y\in \mathcal{X}$ such that the conditions
\begin{eqnarray}\label{halvin}
{\rm Re}\langle x-ha,hA-x\rangle \geq 0 , \ \ {\rm Re}\langle y-hb,hB-y\rangle \geq 0
\end{eqnarray}
hold, then
\begin{eqnarray*}
\big|\langle x,y\rangle -\langle x,h\rangle \langle h,y\rangle \big| &\leq& \left\|\frac{1}{4}|A-a|^2 -{\rm Re} \langle x-ha,hA-x \rangle \right\|^\frac{1}{2}\\ && \cdot \left(\frac{1}{4}|B-b|^2-{\rm Re} \langle y-hb,hB-y \rangle\right)^\frac{1}{2}
\\ &\leq& \frac{1}{4}\|A-a\||B-b|.
\end{eqnarray*}
\end{corollary}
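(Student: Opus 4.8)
The plan is to obtain this as a direct specialization of Theorem \ref{grussmodule}. I would apply that theorem with the substitutions $u=ha$, $v=hA$, $u'=hb$, $v'=hB$ and with $K=h\otimes h$, which is a projection in $\mathcal{L}(\mathcal{X})$ as noted earlier. With these choices $v-u=h(A-a)$ and $v'-u'=h(B-b)$, and the two positivity hypotheses ${\rm Re}\langle x-u,v-x\rangle\geq 0$ and ${\rm Re}\langle y-u',v'-y\rangle\geq 0$ of the theorem are exactly the assumptions (\ref{halvin}).

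Before invoking the theorem I would check its range condition $u+v,u'+v'\in{\rm ran}(K)$. Since $u+v=h(a+A)$ and $u'+v'=h(b+B)$, it is enough to show that $hc\in{\rm ran}(h\otimes h)$ for every $c\in\mathcal{A}$. This is where the lifted-projection identity $h|h|=h$ enters: a short computation gives $(h\otimes h)(hc)=h\langle h,hc\rangle=h\langle h,h\rangle c=h|h|c=hc$, so $h\otimes h$ actually fixes every element of the form $hc$, and in particular $u+v$ and $u'+v'$ lie in its range. The same kind of computation identifies the left-hand side: from $(h\otimes h)x=h\langle h,x\rangle$ and the module rules $\langle h\alpha,y\rangle=\alpha^*\langle h,y\rangle$ and $\langle h,x\rangle^*=\langle x,h\rangle$, one obtains $\langle Kx,y\rangle=\langle x,h\rangle\langle h,y\rangle$, so that $\langle x,y\rangle-\langle Kx,y\rangle$ is precisely $\langle x,y\rangle-\langle x,h\rangle\langle h,y\rangle$.

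At this stage Theorem \ref{grussmodule} already yields the asserted inequality, except that it produces the ``compressed'' absolute values $|h(A-a)|^2$ and $|h(B-b)|^2$ rather than $|A-a|^2$ and $|B-b|^2$. Replacing these is the one genuine step, and the part I expect to need the most care. Here I would use that $\langle h,h\rangle=|h|$ is a projection, hence $\langle h,h\rangle\leq I$, to get $|h(A-a)|^2=(A-a)^*\langle h,h\rangle(A-a)\leq(A-a)^*(A-a)=|A-a|^2$, and likewise for $B-b$. Since the elements $\frac{1}{4}|h(A-a)|^2-{\rm Re}\langle x-ha,hA-x\rangle$ and $\frac{1}{4}|h(B-b)|^2-{\rm Re}\langle y-hb,hB-y\rangle$ are positive (Lemma \ref{lemgrussmodule}), monotonicity of the $C^*$-norm and of the operator square root on positive elements upgrades the theorem's bound to the first inequality of the corollary. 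Finally, for the second inequality I would discard the two nonnegative real parts, using $\big\|\frac{1}{4}|A-a|^2\big\|^{1/2}=\frac{1}{2}\|A-a\|$ and $\big(\frac{1}{4}|B-b|^2\big)^{1/2}=\frac{1}{2}|B-b|$, to arrive at $\frac{1}{4}\|A-a\|\,|B-b|$.
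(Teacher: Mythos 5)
Your proposal is correct and follows essentially the same route as the paper's own proof: apply Theorem \ref{grussmodule} with $K=h\otimes h$, verify $hc\in{\rm ran}(h\otimes h)$ via the lifted-projection identity, and then pass from $|h(A-a)|^2$ to $|A-a|^2$ using $\|h\|=1$ (equivalently $\langle h,h\rangle\leq I$). Your treatment is in fact slightly more careful than the paper's, since you explicitly justify the replacement inside the square roots by operator monotonicity, a step the paper leaves implicit.
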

 \begin{proof}
First note that for every $c \in \mathcal{A}$, we have $(h\otimes h)(hc)=h\langle h,hc\rangle =h\langle h,h\rangle c=hc$, whence $hc \in {\rm ran} ( h\otimes h)$. Thus, $\{ha,hA,hb,hB\} \subseteq {\rm{ran}} (h\otimes h)$. Moreover, since $\|h\|=1$, we have $|h(B-A)|^2 \leq |B-A|^2$. Employing Theorem \ref{grussmodule} we get
\begin{eqnarray*}
\big|\langle x,y\rangle -\langle x,h\rangle \langle h,y\rangle \big| &=& \big|\langle x,y\rangle -\langle (h\otimes h) x,y\rangle\big|\\ &\leq& \left\|\frac{1}{4}|h(A-a)|^2 -{\rm Re} \langle x-ha,hA-x \rangle \right\|^\frac{1}{2}\\ && \cdot \left(\frac{1}{4}|h(B-b)|^2-{\rm Re} \langle y-hb,hB-y \rangle\right)^\frac{1}{2} \\ &\leq& \left\|\frac{1}{4}|A-a|^2 -{\rm Re} \langle x-ha,hA-x \rangle \right\|^\frac{1}{2}\\ && \cdot \left(\frac{1}{4}|B-b|^2-{\rm Re} \langle y-hb,hB-y \rangle\right)^\frac{1}{2}
\\ &\leq& \frac{1}{4}\|A-a\||B-b|.
\end{eqnarray*}
\end{proof}
Similarly, in Corollary \ref{lift} we can replace $A,a,b,B \in \mathcal{A}$ with $\alpha ,\beta ,\gamma, \lambda \in \mathbb{C}$. \\
Another application of Theorem \ref{grussmodule} yields a Gr\"{u}ss inequality for a Hilbert space (as a Hilbert $\mathbb{C}$-module). It is proved  by Dragomir \cite[Theorem 3.1]{dra1}.
\begin{corollary}
Let $(\mathcal{H},\langle \cdot,\cdot \rangle)$ be a Hilbert space and $e\in \mathcal{H}$ be a unit vector. If $\alpha,\beta,\gamma$ and $\Gamma$ are real or complex numbers and $x,y$ are vectors in $\mathcal{H}$ such that ${\rm Re}\langle \beta e -x ,x-\alpha e\rangle \geq 0$ and ${\rm Re}\langle\Gamma e -y,y-\gamma e\rangle \geq 0$, then \begin{eqnarray*}
 \big|\langle x,y\rangle -\langle x,e\rangle \langle e,y\rangle \big| &\leq& \frac{1}{4}|\beta -\alpha | |\Gamma - \gamma| \\ && \ - \big({\rm Re}\langle \beta e -x ,x-\alpha e\rangle \big)^\frac{1}{2} \big({\rm Re}\langle\Gamma e -y,y-\gamma e\rangle\big)^\frac{1}{2}.
\end{eqnarray*}
\end{corollary}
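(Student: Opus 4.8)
The plan is to view the Hilbert space $\mathcal{H}$ as a Hilbert module over $\mathcal{A}=\mathbb{C}$ and to invoke Theorem \ref{grussmodule}. First I would set $K=e\otimes e$. Since $e$ is a unit vector, $\langle e,e\rangle=1$, so $(e\otimes e)^2(z)=e\langle e,e\langle e,z\rangle\rangle=e\langle e,z\rangle=(e\otimes e)(z)$ and $(e\otimes e)^*=e\otimes e$; hence $K$ is a projection with ${\rm ran}(K)=\mathbb{C}e$. Then I would substitute $u=\alpha e,\ v=\beta e,\ u'=\gamma e,\ v'=\Gamma e$, so that $u+v=(\alpha+\beta)e$ and $u'+v'=(\gamma+\Gamma)e$ lie in ${\rm ran}(K)$, as required. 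The positivity hypotheses of Theorem \ref{grussmodule}, namely ${\rm Re}\langle x-u,v-x\rangle\geq 0$ and ${\rm Re}\langle y-u',v'-y\rangle\geq 0$, coincide with the assumptions of the corollary, because conjugate symmetry of the inner product gives ${\rm Re}\langle x-\alpha e,\beta e-x\rangle={\rm Re}\langle \beta e-x,x-\alpha e\rangle$ and likewise for $y$.

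Next I would read off what Theorem \ref{grussmodule} delivers in this setting. Since $\mathcal{A}=\mathbb{C}$, the $C^*$-norm is the modulus and $|v-u|^2=\langle(\beta-\alpha)e,(\beta-\alpha)e\rangle=|\beta-\alpha|^2$, while $\langle Kx,y\rangle=\langle e\langle e,x\rangle,y\rangle=\langle x,e\rangle\langle e,y\rangle$. Writing $P={\rm Re}\langle\beta e-x,x-\alpha e\rangle\geq 0$ and $Q={\rm Re}\langle\Gamma e-y,y-\gamma e\rangle\geq 0$, the theorem then yields
\begin{eqnarray*}
\big|\langle x,y\rangle-\langle x,e\rangle\langle e,y\rangle\big|\leq\left(\tfrac{1}{4}|\beta-\alpha|^2-P\right)^{\frac12}\left(\tfrac{1}{4}|\Gamma-\gamma|^2-Q\right)^{\frac12},
\end{eqnarray*}
where both factors are nonnegative reals (so the ambient $\|\cdot\|$ is harmless), a fact guaranteed by Lemma \ref{lemgrussmodule}.

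The remaining step, which I expect to be the only point requiring a genuine computation, is to pass from this product of square roots to the sharper bound in the statement, i.e. to show
\begin{eqnarray*}
\left(\tfrac{1}{4}|\beta-\alpha|^2-P\right)^{\frac12}\left(\tfrac{1}{4}|\Gamma-\gamma|^2-Q\right)^{\frac12}\leq\tfrac{1}{4}|\beta-\alpha||\Gamma-\gamma|-P^{\frac12}Q^{\frac12}.
\end{eqnarray*}
Setting $a=\tfrac{1}{2}|\beta-\alpha|,\ b=\tfrac{1}{2}|\Gamma-\gamma|,\ p=P^{\frac12},\ q=Q^{\frac12}$, I have $a\geq p\geq 0$ and $b\geq q\geq 0$ (the quantities under the roots being nonnegative), so both sides of the target inequality are nonnegative and squaring is legitimate. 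The inequality then reduces to $(a^2-p^2)(b^2-q^2)\leq(ab-pq)^2$, and expanding shows that $(ab-pq)^2-(a^2-p^2)(b^2-q^2)=(aq-pb)^2\geq 0$. Recognizing this perfect square is the crux; once it is in hand, the corollary follows at once by chaining the two displayed inequalities.
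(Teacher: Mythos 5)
Your proposal is correct and follows essentially the same route as the paper: specialize Theorem \ref{grussmodule} to $K=e\otimes e$ with $u=\alpha e$, $v=\beta e$, $u'=\gamma e$, $v'=\Gamma e$, and then sharpen the product of square roots via the elementary inequality $(m^2-n^2)(p^2-q^2)\leq(mp-nq)^2$ for nonnegative reals, which is exactly the inequality the paper invokes. The only cosmetic difference is that you verify the sign conventions and the nonnegativity of the quantities under the roots a bit more explicitly than the paper does.
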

\begin{proof}
Clearly, $e\otimes e$ is a rank one projection in $\mathbb{B}(\mathcal{H})$. Similar to the proof of Corollary \ref{lift}, $\{\alpha e,\beta e,\gamma e,\Gamma e\} \subseteq {\rm ran} ( e\otimes e).$
Based on Theorem \ref{grussmodule} we have
\begin{eqnarray*}
\big|\langle x,y\rangle -\langle x,e\rangle \langle e,y\rangle \big|^2 & =&\big|\langle x,y\rangle -\langle (e\otimes e) x,y\rangle \big|^2 \\ &\leq &\left[\frac{1}{4}\|(\beta -\alpha)e\|^2 -([{\rm Re} \langle \beta e -x ,x-\alpha e\rangle]^\frac{1}{2})^2\right] \\ & & \cdot \left[ \frac{1}{4}|(\Gamma - \gamma)e|^2 -([{\rm Re}\langle\Gamma e -y,y-\gamma e\rangle]^\frac{1}{2} )^2\right] \\ &\leq& \left[\frac{1}{4}|\beta -\alpha|^2 -([{\rm Re} \langle \beta e -x ,x-\alpha e\rangle]^\frac{1}{2})^2\right] \\ & & \cdot \left[ \frac{1}{4}|\Gamma - \gamma|^2 -([{\rm Re}\langle\Gamma e -y,y-\gamma e\rangle]^\frac{1}{2} )^2\right]\\ &\leq & \frac{1}{4}|\beta -\alpha | |\Gamma - \gamma| \\ &&\ -  \left([{\rm Re}\langle \beta e -x ,x-\alpha e\rangle ]^\frac{1}{2} [{\rm Re}\langle\Gamma e -y,y-\gamma e\rangle]^\frac{1}{2} \right) ^2 .
\end{eqnarray*}
Note that the last inequality follows from the elementary inequality
\begin{eqnarray*}
(m^2-n^2)(p^2-q^2)\leq (mp-nq)^2
\end{eqnarray*}
 for positive real numbers.
\end{proof}

Now we give a Gr\"{u}ss type inequality for accretive operators.

\begin{proposition}\label{accf}
Let $\mathcal{X}$ be a Hilbert module over a $C^*$-algebra $\mathcal{A}$. Let $T\in \mathcal{L}(\mathcal{X})$ be a self-adjoint operator. If $A,B \in \mathcal{Z}(\mathcal{A})$ and operator $C_{A,B}(T)=(T-R_A)^*(R_B-T)$ is accretive, then
\begin{eqnarray*}
\langle T^2 h,h \rangle - \langle Th,h\rangle^2 \leq \frac{1}{4}|B-A|^2- \langle{\rm Re}\ C_{A,B}(T) h, h \rangle \leq \frac{1}{4}|B-A|^2
\end{eqnarray*}
for all lifted projections $h \in\mathcal{X}$.

\end{proposition}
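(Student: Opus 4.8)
The plan is to obtain both inequalities in one stroke by specializing Lemma \ref{lemgrussmodule} to a carefully chosen projection and a carefully chosen pair of auxiliary vectors. Since $h$ is a lifted projection, $K:=h\otimes h$ is a projection in $\mathcal{L}(\mathcal{X})$, and since $A,B\in\mathcal{Z}(\mathcal{A})$ the operators $R_A,R_B$ belong to $\mathcal{L}(\mathcal{X})$, so $C_{A,B}(T)=(T-R_A)^*(R_B-T)$ is a genuine adjointable operator. I would then take
\[
x=Th,\qquad u=hA=R_Ah,\qquad v=hB=R_Bh.
\]
Exactly as in the proof of Corollary \ref{lift}, every element of the form $hc$ lies in ${\rm ran}(h\otimes h)$, so $u+v=h(A+B)\in{\rm ran}(K)$ and the hypothesis of Lemma \ref{lemgrussmodule} is satisfied.

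Next I would match the three ingredients of Lemma \ref{lemgrussmodule} with the quantities in the statement. For the left-hand side, $T=T^*$ gives $|Th|^2=\langle Th,Th\rangle=\langle T^2h,h\rangle$, while $\langle K(Th),Th\rangle=\langle h,Th\rangle^*\langle h,Th\rangle=\langle Th,h\rangle^2$ because $\langle Th,h\rangle$ is self-adjoint; hence $[Th,Th]_{(I-K)}=\langle T^2h,h\rangle-\langle Th,h\rangle^2$, precisely the left-hand side of the claim. For the cross term, writing $x-u=(T-R_A)h$ and $v-x=(R_B-T)h$ and shifting the operators across the inner product yields $\langle x-u,v-x\rangle=\langle h,C_{A,B}(T)h\rangle$, so that
\[
{\rm Re}\,\langle x-u,v-x\rangle=\big\langle {\rm Re}\,C_{A,B}(T)\,h,\,h\big\rangle,
\]
which is nonnegative by the assumed accretivity of $C_{A,B}(T)$. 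Finally, since $B-A$ is central it commutes with the projection $\langle h,h\rangle$, whence $|v-u|^2=|h(B-A)|^2=(B-A)^*\langle h,h\rangle(B-A)=|B-A|^2|h|^2$.

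Substituting these three identities into Lemma \ref{lemgrussmodule} gives
\[
\langle T^2h,h\rangle-\langle Th,h\rangle^2\ \leq\ \tfrac14|B-A|^2|h|^2-\big\langle {\rm Re}\,C_{A,B}(T)h,h\big\rangle .
\]
To pass to the stated middle term I would discard the factor $|h|^2$, which is legitimate because $|B-A|^2\big(I-|h|^2\big)\geq 0$: writing $z=|B-A|^2$ (positive and central) and $w=I-|h|^2\geq0$, the root $z^{1/2}$ is again central, so $zw=z^{1/2}wz^{1/2}\geq0$. This yields the first inequality, and the second, rightmost inequality is then immediate from $\langle {\rm Re}\,C_{A,B}(T)h,h\rangle\geq0$. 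The only delicate point — and where I expect to spend the most care — is the systematic bookkeeping forced by the centrality of $A,B$: it is what makes $R_A,R_B$ adjointable, what collapses $|h(B-A)|^2$ to $|B-A|^2|h|^2$, and what guarantees $|B-A|^2|h|^2\leq|B-A|^2$.
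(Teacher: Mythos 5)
Your proposal is correct and follows exactly the paper's own route: both apply Lemma \ref{lemgrussmodule} with $x=Th$, $u=hA$, $v=hB$, $K=h\otimes h$, using $hA,hB\in{\rm ran}(h\otimes h)$ and $|h(B-A)|^2\leq|B-A|^2$. You simply spell out the identifications $[Th,Th]_{(I-K)}=\langle T^2h,h\rangle-\langle Th,h\rangle^2$ and ${\rm Re}\langle x-u,v-x\rangle=\langle{\rm Re}\,C_{A,B}(T)h,h\rangle$ that the paper leaves implicit.
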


\begin{proof}
First note that $|h(B-A)|^2 \leq |B-A|^2$. Moreover, for every lifted projection $h$, $\{hA,hB\}\subseteq {\rm ran}(h\otimes h)$. Employing Lemma \ref{lemgrussmodule} for $x=Th, u=hA, v=hB$ and $K=h \otimes h$ we get the result.
\end{proof}
\begin{corollary}
Let $T \in \mathbb{B}(\mathcal{H})$ be a self-adjoint operator and $m,M $ be positive numbers. If $mI\leq T \leq MI$, then
\begin{eqnarray*}
\langle T^2 x,x \rangle - \langle Tx,x\rangle^2 \leq \frac{1}{4}(M-m)^2- \langle(T-mI)(MI-T)x,x\rangle,
\end{eqnarray*}
for every unit vector $x\in \mathcal{H}$.
\end{corollary}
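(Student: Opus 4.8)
The plan is to obtain this corollary as the special case of Proposition \ref{accf} in which the underlying Hilbert $C^*$-module is an ordinary Hilbert space. First I would regard $\mathcal{H}$ as a Hilbert module over the $C^*$-algebra $\mathbb{C}$, so that $\mathcal{L}(\mathcal{H})=\mathbb{B}(\mathcal{H})$ and the module action is just scalar multiplication. In this setting the center $\mathcal{Z}(\mathbb{C})$ is all of $\mathbb{C}$, so every scalar is an admissible choice for the parameters $A,B$ of the proposition, and for a scalar $a$ the right-multiplication operator $R_a$ is simply $aI$.

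Next I would identify the lifted projections. Since $|h|=\langle h,h\rangle^{1/2}$ must be a projection in $\mathbb{C}$ and $h\neq 0$, one necessarily has $\langle h,h\rangle=1$; that is, the lifted projections in $\mathcal{H}$ are precisely the unit vectors. Hence a unit vector $x$ plays exactly the role of $h$ in the proposition.

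Then I would specialize the parameters by taking $A=m$ and $B=M$, so that $R_A=mI$ and $R_B=MI$, giving
\begin{eqnarray*}
C_{m,M}(T)=(T-mI)^*(MI-T)=(T-mI)(MI-T)
\end{eqnarray*}
since $T$ is self-adjoint and $m$ is real. Because $T-mI$ and $MI-T$ are commuting positive operators — this is where the hypothesis $mI\leq T\leq MI$ enters — their product is again positive and self-adjoint. In particular $C_{m,M}(T)$ is accretive and equals its own real part, so ${\rm Re}\,C_{m,M}(T)=(T-mI)(MI-T)$.

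Finally, substituting $h=x$, $A=m$, $B=M$ into the conclusion of Proposition \ref{accf} and using $|M-m|^2=(M-m)^2$ yields the asserted inequality directly. The only step requiring genuine care is the verification that $C_{m,M}(T)$ is accretive and self-adjoint, which is immediate once one observes that $T-mI$ and $MI-T$ commute; beyond this there is no real obstacle, the work being the routine translation of the module-theoretic data of the proposition into Hilbert-space language.
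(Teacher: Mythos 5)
Your proposal is correct and follows exactly the paper's route: the paper also proves this by observing that unit vectors are the lifted projections of $\mathcal{H}$ viewed as a Hilbert $\mathbb{C}$-module and then applying Proposition \ref{accf} to the positive (hence accretive) operator $C_{m,M}(T)=(T-mI)(MI-T)$. Your write-up merely spells out the specialization $R_m=mI$, $R_M=MI$ and ${\rm Re}\,C_{m,M}(T)=C_{m,M}(T)$ in more detail than the paper's two-line proof.
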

\begin{proof}
Clearly, each unit vector is a lifted projection in $\mathcal{H}$. A use of Proposition \ref{accf} for positive (and so accretive) operator $C_{m,M}(T)$  completes the proof.
\end{proof}

\end{document}